\definecolor{red}{rgb}{1.0,0.0,0.0}
\definecolor{blu}{rgb}{0.0,0.0,1.0}
\definecolor{gre}{rgb}{0.03,0.50,0.03}
\newtheorem{lemma}{Lemma}[section]
\newtheorem{theorem}{Theorem}[section]
\newtheorem{remark}{Remark}[section]
\newtheorem{definition}{Definition}[section]
\newtheorem{example}{Example}[section]
\let\Section=\section
\def\section{\setcounter{equation}{0}\Section}
\def\Swiech
\def\SWIECH
\begin{document}

\title{{\bf Semiconcavity of viscosity solutions for a class of degenerate elliptic integro-differential equations in $\mathbb R^n$
%\footnote{ XXXX}}
}}

\author{
    \textsc{Chenchen Mou}\\
    \textit{School of Mathematics, Georgia Institute of Technology}\\
\textit{
Atlanta, GA 30332, U.S.A.}\\
 \textit{E-mail: cmou3@math.gatech.edu}    }
\date{}

\maketitle

\begin{abstract}
This paper is concerned with semiconcavity of viscosity solutions for a class of degenerate elliptic integro-differential equations in $\mathbb R^n$. This class of equations includes Bellman equations containing operators of L\'evy-It\^o type. H\"{o}lder and Lipschitz continuity of viscosity solutions for a more general class of degenerate elliptic integro-differential equations are also provided.
\end{abstract}

\vspace{.2cm}
\noindent{\bf Keywords:}  viscosity
solution, integro-PDE, Hamilton-Jacobi-Bellman-Isaacs equation, H\"{o}lder continuity, Lipschitz continuity, semiconcavity.

\vspace{.2cm}
\noindent{\bf 2010 Mathematics Subject Classification}: 35R09, 35D40, 35J60, 47G20, 45K05, 93E20. 
\section{Introduction}

In this paper, we study semiconcavity of viscosity solutions of integro-differential equations of the type 
\begin{equation}\label{eq1.1}
G(x,u,Du,D^2u,I[x,u])=0\quad\text{in}\,\,\mathbb R^n,
\end{equation}
where $\mathbb R^n$ is an $n$-dimensional Euclidean space and $I[x,u]$ is a L\'evy-It\^o operator. The function $u$ is real-valued and the L\'evy-It\^o operator $I$ has the form
\begin{equation*}
I[x,u]:=\int_{\mathbb R^n}[u(x+j(x,\xi))-u(x)-\mathbbm{1}_{B_1(0)}(\xi) Du(x)\cdot j(x,\xi)]\mu(d\xi),
\end{equation*}
where $\mathbbm{1}_{B_1(0)}$ denotes the indicator function of the unit ball ${B_1(0)}$, $j(x,\xi)$ is a function that determines the size of the jumps for the diffusion related to the operator $I$, and $\mu$ is a L\'evy measure. The L\'evy measure $\mu$ is a Borel measure on $\mathbb R^n\setminus\{0\}$ satisfying
\begin{equation}\label{eq:1.5}
\int_{\mathbb R^n\setminus\{0\}}\rho(\xi)^2\mu(d\xi)<+\infty,
\end{equation}
where $\rho:\mathbb R^n\setminus\{0\}\to\mathbb R^+$ is a Borel measurable, locally bounded function satisfying $\lim_{\xi\to 0}\rho(\xi)=0$ and $\inf_{\xi\in B_r^c(0)}\rho(\xi)>0$ for any $r>0$. We extend $\mu$ to a measure on $\mathbb R^n$ by setting $\mu(\{0\})=0$.  Our assumption on $\mu$ implies that $\mu(B_r^c(0))<+\infty$ for any $r>0$. The nonlinearity $G:\mathbb R^n\times \mathbb R\times \mathbb R^n\times \mathbb S^n\times\mathbb R\to \mathbb R$ is a continuous function which is coercive, i.e., there is a positive constant $\gamma$ such that, for any $x,p\in \mathbb R^n$, $r\geq s$, $X\in\mathbb S^n$, $l\in \mathbb R$,
\begin{equation}\label{eq1.3}
\gamma(r-s)\leq G(x,r,p,X,l)-G(x,s,p,X,l),
\end{equation}
and degenerate elliptic in a sense that, for 
any $x,p\in \mathbb R^n$, $r,l_1,l_2\in \mathbb R$, $X,Y\in\mathbb S^n$
\begin{equation}\label{eq1.2}
G(x,r,p,X,l_1)\leq G(x,r,p,Y,l_2)\quad\text{if}\,\, X\geq Y,\,l_1\geq l_2.
\end{equation}
Here $\mathbb S^n$ is the set of symmetric $n\times n$ matrices equipped with its usual order.
We will also be interested in equations of Bellman type
\begin{equation}\label{eq1.7}
\sup_{\alpha\in \mathcal A}\big\{-Tr\big(\sigma_{\alpha}(x)\sigma_{\alpha}^T(x)D^2u(x)\big)-I_{\alpha}[x,u]+b_{\alpha}(x)\cdot Du(x)+c_{\alpha}(x)u(x)+f_{\alpha}(x)\big\}=0,\quad \text{in}\,\,\mathbb R^n, 
\end{equation}
where $\sigma_{\alpha}:\mathbb R^n\to\mathbb R^{n\times m}$, $b_{\alpha}:\mathbb R^n\to \mathbb R^n$, $c_{\alpha}:\mathbb R^n\to\mathbb R$, $f_{\alpha}:\mathbb R^n\to\mathbb R$ are continuous functions,
\begin{equation*}
I_{\alpha}[x,u]=\int_{\mathbb R^n}[u(x+j_{\alpha}(x,\xi))-u(x)-\mathbbm{1}_{B_1(0)}(\xi) Du(x)\cdot j_{\alpha}(x,\xi)]\mu(d\xi)\,\,\text{and}\,\,c_{\alpha}\geq\gamma>0\,\,\text{in}\,\,\mathbb R^n.
\end{equation*}

The proof of semiconcavity of viscosity solutions is done in two steps. We first prove Lipschitz continuity of viscosity solutions. We then adapt to the nonlocal case the approach from \cite{IL,I} for obtaining semiconcavity of viscosity solutions of elliptic partial differential equations. In recent years, regularity theory of viscosity solutions of integro-differential equations has been studied by many authors under different types of ellipticity assumptions. It is impossible for us to make a complete review of all the related literature. However, the following are what we have in mind. Regularity results were initiated by assuming nondegenerate ellipticity of second order terms such as \cite{BL,GM,GL,MP1,MP2,MP3,MP4,MP5,MP6,MP7} for both elliptic and parabolic integro-differential equations. More recently, striking regularity results were obtained under uniform ellipticity assumption for nonlocal terms. This assumption, introduced by L. A. Caffarelli and L. Silvestre, is defined using nonlocal Pucci operators. Several H\"{o}lder, $C^{1,\alpha}$ and Shauder estimates for nonlocal fully nonlinear equations were obtained by various authors \cite{CS1,CS2,CS3,CD1,CD2,CD3,JX,Kri,Se1,Se,Si} under this uniform ellipticity assumption. The other notion of uniform ellipticity was defined by G. Barles, E. Chasseigne and C. Imbert. It requires either nondegeneracy of the nonlocal terms, or nondegeneracy of nonlocal terms in some directions and nondegeneracy of second order terms in the complementary directions. It was used to obtain H\"{o}lder and Lipschitz continuity for a class of mixed integro-differential equations, see \cite{BCCI,BCI}. 

In Section \ref{sec:holderlip}, we study H\"{o}lder and Lipschitz continuity of viscosity solutions for ($\ref{eq1.1}$) and equations of Bellman-Issacs type, i.e.,
\begin{equation}\label{eq..1.6}
\sup_{\alpha\in \mathcal A}\inf_{\beta\in \mathcal B}\big\{-Tr\big(\sigma_{\alpha\beta}(x)\sigma_{\alpha\beta}^T(x)D^2u(x)\big)-I_{\alpha\beta}[x,u]+b_{\alpha\beta}(x)\cdot Du(x)+c_{\alpha\beta}(x)u(x)+f_{\alpha\beta}(x)\big\}=0,\,\, \text{in}\,\,\mathbb R^n
\end{equation}
where $I_{\alpha\beta}[x,u]=\int_{\mathbb R^n}[u(x+j_{\alpha\beta}(x,\xi))-u(x)-\mathbbm{1}_{B_1(0)}(\xi) Du(x)\cdot j_{\alpha\beta}(x,\xi)]\mu(d\xi)$ and
\begin{equation}\label{eq.1.7}
c_{\alpha\beta}\geq\gamma>0\quad \text{in}\,\,\mathbb R^n.
\end{equation}
Our H\"{o}lder and Lipschitz continuity results are different from these of \cite{BCCI,BCI,Si} since we allow both the nonlocal terms and the second order terms to be degenerate. However, to compensate for degeneracy, we need to assume that the constant $\gamma$ appearing in ($\ref{eq1.3}$) and ($\ref{eq.1.7}$) is sufficiently large. The reader can consult \cite{jk2} for continuous dependence and continuity estimates for viscosity solutions of nonlinear degenerate parabolic integro-differential equations.

Having the Lipschitz continuity results, in Section \ref{sec:semi}  we derive the main results of this manuscript, i.e., semiconcavity of viscosity solutions of equations ($\ref{eq1.1}$) and ($\ref{eq1.7}$). To our knowledge, the only available results in this direction are about semiconcavity of viscosity solutions of time dependent integro-differential equations of Hamilton-Jacobi-Bellman (HJB) type whose proofs are based on probabilistic arguments. In \cite{J}, the author proved joint time-space semiconcavity of viscosity solutions of time dependent integro-differential equations of HJB type with terminal condition, using a representation formula based on forward and backward stochastic differential equations. However, the proof there depended on a restrictive assumption that the L\'evy measure $\mu$ is finite. In another paper \cite{F},  it was shown that the value function of an abstract infinite dimensional optimal control problem is $w$-semiconcave, if the data in the state evolution equation are $C^{1,w}$ and the data in the cost functional are $w$-semiconcave. The method was then applied to the finite dimensional Euclidean space providing semiconcavity result for the value function of a stochastic optimal control problem associated with a time dependent version of ($\ref{eq1.7}$). Later the auther extended the semiconcavity result in state variables to that in time and state variables jointly in \cite{F1}. Our result for ($\ref{eq1.7}$) extends results of \cite{F} to the time independent case and provide a different purely analytical approach. The result for ($\ref{eq1.1}$) is totally new since the solution may not have an explicit probabilistic representation formula and thus the analytical proof seems to be the only available method. Finally we remark that regarding semiconcavity of viscosity solutions of PDEs of HJB type, in addition to the already mentioned analytical proofs of \cite{IL,I}, other proofs by probabilistic methods can be found in \cite{FS,Kry,L1,L,LS,YZ}.

\section{Notation and Definitions}
\label{sec:notdef}
We will write $B_\delta(x)$ for the open ball centered at $x$ with radius $\delta>0$, $USC(\mathbb R^n)$ $(LSC(\mathbb R^n))$ for the space
of upper (lower) semi-continuous functions in $\mathbb R^n$ and $BUC(\mathbb R^n)$ for the space of bounded and uniformly continuous functions in $\mathbb R^n$. If $\Omega'$ is an open set, for each non-negative integer $k$ and $0<\alpha\leq1$, we denote by $C^{k,\alpha}(\Omega')$ ($C^{k,\alpha}(\bar{\Omega}')$) the subspace of $C^k(\Omega')$ ($C^{k}(\bar {\Omega}')$) consisting functions whose $k$th partial derivatives are locally (uniformly)  $\alpha$-H\"older continuous in $\Omega'$. We note that $C^{0,\alpha}(\mathbb R^n)$ ($C^{0,\alpha}(\bar{\mathbb R}^n)$) is the space of functions are locally (uniformly) $\alpha$-H\"older continuous in $\mathbb R^n$. We will use the standard notation
\[
\left[u\right]_{k,\alpha;\Omega'}:=\sup_{x,y\in\Omega',x\not=y,|j|=k}\frac{|\partial^ju(x)-\partial^ju(y)|}{|x-y|^\alpha},\quad \text{if}\,\,0<\alpha\leq1,
\]
 and 
\[
|u|_{k;\Omega'}:=\sup_{x\in\Omega',|j|=k}|\partial^ju(x)|,
\]
where $j=(j_1,j_2,\cdots,j_n)\in\mathbb N^n$, $|j|:=j_1+j_2+\cdots+j_n$ and $\partial^j u:=\frac{\partial^{|j|}u}{(\partial x_1)^{j_1}(\partial x_2)^{j_2}\cdots(\partial x_n)^{j_n}}$. For any $1<\theta'\leq 2$ and any convex open set $\Omega''$, we say a set of functions $\{f_\alpha\}_{\alpha\in\mathcal{A}}$ is uniformly $\theta'$-semiconvex with constant $C$ in $\Omega''$ if, for any $x,y\in\Omega''$, $\alpha\in\mathcal{A}$,
\begin{equation*}
2f_{\alpha}(\frac{x+y}{2})-f_\alpha(x)-f_\alpha(y)\leq C|x-y|^{\theta'}.
\end{equation*}
We say a set of functions $\{f_\alpha\}_{\alpha\in\mathcal{A}}$ is uniformly $\theta'$-semiconcave with constant $C$ in $\Omega''$ if $\{-f_\alpha\}_{\alpha\in\mathcal{A}}$ is uniformly $\theta'$-semiconvex with constant $C$ in $\Omega''$. If the set $\mathcal{A}$ is a unit set, i.e., $\mathcal{A}=\{\alpha_0\}$, then we just simply say that $f_{\alpha_0}$ is $\theta'$-semiconvex ($\theta'$-semiconcave) in $\Omega''$.

We then recall the definition of a 
viscosity solution 
of ($\ref{eq1.1}$). In order to do it, we introduce two associated operators $I^{1,\delta}$ and 
$I^{2,\delta}$,
\begin{equation*}
I^{1,\delta}[x,p,u]=\int_{|\xi|<\delta}[u(x+j(x,\xi))-u(x)-\mathbbm{1}_{B_1(0)}(\xi) p\cdot j(x,\xi)]\mu(d\xi), 
\end{equation*}
\begin{equation*}
I^{2,\delta}[x,p,u]=\int_{|\xi|\geq\delta}[u(x+j(x,\xi))-u(x)-\mathbbm{1}_{B_1(0)}(\xi) p\cdot j(x,\xi)]\mu(d\xi).
\end{equation*}
\begin{definition}\label{de2.1}
A bounded function $u\in USC(\mathbb R^n)$ is a viscosity subsolution of ($\ref{eq1.1}$) if whenever $u-\varphi$ has a maximum over 
$B_\delta(x)$ at $x\in \mathbb R^n$ for a test function $\varphi\in C^2(B_\delta(x))$, $\delta>0$, then
\begin{equation*}\label{eqq2.1}
G\big(x,u(x),D\varphi(x),D^2\varphi(x),I^{1,\delta}[x,D\varphi(x),\varphi]+I^{2,\delta}[x,D\varphi(x),u]\big)\leq 0.
\end{equation*}
A bounded function $u\in LSC(\mathbb R^n)$ is a viscosity supersolution of ($\ref{eq1.1}$) if whenever $u-\varphi$ has 
a minimum over $B_\delta(x)$ at $x\in \mathbb R^n$ for a test function $\varphi\in C^2(B_\delta(x))$, $\delta>0$, then
\begin{equation*}\label{eqq2.2}
G\big(x,u(x),D\varphi(x),D^2\varphi(x),I^{1,\delta}[x,D\varphi(x),\varphi]+I^{2,\delta}[x,D\varphi(x),u]\big)\geq 0.
\end{equation*}
A function $u$ is a viscosity solution of ($\ref{eq1.1}$) if it is both a viscosity subsolution and viscosity supersolution
of ($\ref{eq1.1}$).
\end{definition}

\section{H\"{o}lder and Lipschitz continuity}
\label{sec:holderlip}

In this section we prove the H\"{o}lder and Lipschitz continuity of viscosity solutions of ($\ref{eq1.1}$) and ($\ref{eq..1.6}$). We start with equation ($\ref{eq1.1}$). We make the following assumptions on the nonlinearity $G$ and the function $j(x,\xi)$.
\\
\\
($H1$) There are a constant $0<\theta\leq 1$, a non-negative constant $\Lambda$ and two positive constants $C_1,C_2$ such that, for any $x,y\in \mathbb R^n$, $r,l_x,l_y\in \mathbb R$, $X,Y\in\mathbb S^n$ and $L,\eta>0$, we have
\begin{eqnarray*}
&&G(y,r,L\theta|x-y|^{\theta-2}(x-y),Y,l_y)-G(x,r,L\theta|x-y|^{\theta-2}(x-y)+2\eta x,X,l_x)\\
&\leq&\Lambda(l_x-l_y)+C_1(1+L)|x-y|^\theta+C_2\eta(1+|x|^2),
\end{eqnarray*}
if
\begin{equation*}
\left(         
  \begin{array}{cc}   
    X  &   0\\  
    0   &-Y 
  \end{array}
\right)\leq L|x-y|^{\theta-2}
\left(         
  \begin{array}{cc}   
    I  &   -I\\  
    -I   & I 
  \end{array}
\right)+2\eta
\left(         
  \begin{array}{cc}   
    I   &   \,\,0\\  
 0 &\,\,   0 
  \end{array}
\right).
\end{equation*}
($H2$) For any $x,y\in \mathbb R^n$, we have
\begin{equation*}
|j(x,\xi)-j(y,\xi)|\leq |x-y|\rho(\xi)\quad\text{for}\,\,\xi\in \mathbb R^n,
\end{equation*}
\begin{equation*}
|j(0,\xi)|\leq \rho(\xi)\quad\text{for}\,\,\xi\in \mathbb R^n.
\end{equation*}
The following lemma is a nonlocal version of the Jensen-Ishii lemma we borrow from \cite{jk1}, Theorem $4.9$. The reader can consult \cite{BI} for a more general Jensen-Ishii lemma for integro-differential eqations, which allows for arbitrary growth of solutions at infinity. Before giving the lemma, we notice that our Definition \ref{de2.1} corresponds to the alternative definition of a viscosity solution in \cite{jk1}, see Lemma $4.8$. 
\begin{lemma}\label{le3.1}
Suppose that the nonlinearity $G$ in ($\ref{eq1.1}$) is continuous and satisfies ($\ref{eq:1.5}$)-($\ref{eq1.2}$). Let $u,v$ be bounded functions and be respectively a viscosity subsolution and a viscosity supersolution of 
\begin{equation*}
G(x,u,Du,D^2u,I[x,u])= 0\,\,\,\,\text{and}\,\,\,\,G(x,v,Dv,D^2v,I[x,v])= 0\quad\text{in}\,\,\mathbb R^n.
\end{equation*}
Let $\psi\in C^2(\mathbb R^{2n})$ and $(\hat x,\hat y)\in\mathbb R^n\times\mathbb R^n$ be such that 
\begin{equation*}
(x,y)\mapsto u(x)-v(y)-\psi(x,y)
\end{equation*}
has a global maximum at $(\hat x,\hat y)$. Furthermore, assume that in a neighborhood of $(\hat x,\hat y)$ there are continuous functions $g_0:\mathbb R^{2n}\to \mathbb R$, $g_1:\mathbb R^n\to\mathbb S^n$ with $g_0(\hat x,\hat y)>0$, satisfying 
\begin{equation*}
D^2\psi(x,y)\leq g_0(x,y)\left(         
  \begin{array}{cc}   
    I  &   -I\\  
    -I   & I 
  \end{array}
\right)+
\left(         
  \begin{array}{cc}   
    g_1(x)  &   0\\  
    0   & 0 
  \end{array}
\right).
\end{equation*}
Then, for any $0<\delta<1$ and $\epsilon_0>0$, there are $X,Y\in\mathbb S^n$ satisfying
\begin{equation*}
\left(         
  \begin{array}{cc}   
    X  &   0\\  
    0   &   -Y 
  \end{array}
\right)-\left(         
  \begin{array}{cc}   
    g_1(\hat x)  &   0\\  
    0   &   0 
  \end{array}
\right)
\leq (1+\epsilon_0)g_0(\hat x,\hat y)\left(         
  \begin{array}{cc}   
    I  &   -I\\  
    -I   & I 
  \end{array}
\right),
\end{equation*}
such that
\begin{equation*}
G\big(\hat x,u(\hat x),D_x\psi(\hat x,\hat y),X,I^{1,\delta}[\hat x,D_x\psi(\hat x,\hat y),\psi(\cdot,\hat y)]+I^{2,\delta}[\hat x,D_x\psi(\hat x,\hat y),u(\cdot)]\big)\leq 0,
\end{equation*}
\begin{equation*}
G\big(\hat y,v(\hat y),-D_y\psi(\hat x,\hat y),Y,I^{1,\delta}[\hat y,-D_y\psi(\hat x,\hat y),-\psi(\hat x,\cdot)]+I^{2,\delta}[\hat y,-D_y\psi(\hat x,\hat y),v(\cdot)]\big)\geq 0.
\end{equation*}
\end{lemma}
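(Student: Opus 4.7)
The plan is to combine the classical Crandall-Ishii maximum principle for semicontinuous functions, which produces the matrices $X,Y$ together with the quadratic block-matrix inequality, with the integro-differential splitting in which the nonlocal operator is evaluated through the test function on small jumps and through the sub/supersolution itself on large jumps. The equivalence of Definition \ref{de2.1} with the semi-jet version of a viscosity solution (Lemma 4.8 in \cite{jk1}) is what lets these two ingredients be assembled into a single viscosity inequality.

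First I would apply the classical maximum principle to $u(x)-v(y)-\psi(x,y)$ at the maximum point $(\hat x,\hat y)$. Using the hypothesis
\begin{equation*}
D^2\psi(x,y)\leq g_0(x,y)\left(\begin{array}{cc} I & -I\\ -I & I\end{array}\right)+\left(\begin{array}{cc} g_1(x) & 0\\ 0 & 0\end{array}\right)
\end{equation*}
near $(\hat x,\hat y)$, a Jensen-Alexandrov regularization (equivalently, the Crandall-Ishii-Lions theorem applied to the ``effective'' penalty $\psi-\tfrac12 g_1(\hat x)x\cdot x$ in the $x$-slot) produces matrices $X,Y\in\mathbb S^n$ lying in the closures of the second-order superjet of $u$ at $\hat x$ and of the subjet of $v$ at $\hat y$, satisfying the required bound with the customary Jensen loss factor $(1+\epsilon_0)$.

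Next, to rewrite the nonlocal term in the viscosity inequalities, I would use a $C^2$-pasting argument: for any $\delta\in(0,1)$ define
\begin{equation*}
\bar u(z):=\begin{cases} \psi(z,\hat y)+u(\hat x)-\psi(\hat x,\hat y), & |z-\hat x|<\delta,\\ u(z), & |z-\hat x|\geq\delta,\end{cases}
\end{equation*}
and the analogous $\underline v$ for the supersolution. Since $(\hat x,\hat y)$ is a global maximum of $u-v-\psi$, we have $u\leq \bar u$ globally with equality at $\hat x$, and $\bar u$ is $C^2$ on $B_\delta(\hat x)$. The jet-based alternative of Definition \ref{de2.1}, applied with $(p,X)=(D_x\psi(\hat x,\hat y),X)$ in the superjet of $u$ at $\hat x$, gives the subsolution inequality using $\bar u$ inside the nonlocal operator; splitting the resulting integral at $|\xi|=\delta$ then produces precisely
\begin{equation*}
I^{1,\delta}[\hat x,D_x\psi(\hat x,\hat y),\psi(\cdot,\hat y)]+I^{2,\delta}[\hat x,D_x\psi(\hat x,\hat y),u],
\end{equation*}
which is the first claimed inequality. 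The symmetric construction with test function $-\psi(\hat x,\cdot)$ and $\underline v$ produces the supersolution inequality at $\hat y$ with $Y$.

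The main technical obstacle is extracting a single pair $(X,Y)$ and a single choice of nonlocal data that work in both inequalities simultaneously. This requires a Jensen-Alexandrov regularization compatible with the L\'evy bound ($\ref{eq:1.5}$): the $C^2$ control of $\psi$ near $(\hat x,\hat y)$ keeps $I^{1,\delta}$ integrable in the limit through the quadratic vanishing of the integrand with the $\rho(\xi)^2$ weight from ($H2$), while the boundedness of $u,v$ combined with $\mu(B_\delta^c(0))<\infty$ does the same for $I^{2,\delta}$. Once these are in place, the monotonicity ($\ref{eq1.2}$) of $G$ in the nonlocal slot $l$ transfers the inequality for $\bar u$ into the one displayed. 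This packaging, the technical heart of Theorem 4.9 in \cite{jk1}, is what makes the result stronger than a straightforward juxtaposition of the PDE maximum principle with the definition of a viscosity solution.
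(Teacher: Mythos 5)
The paper does not actually prove this lemma: as stated in the surrounding text, Lemma~\ref{le3.1} is borrowed verbatim from Jakobsen--Karlsen \cite{jk1}, Theorem~4.9, with the only remark being that Definition~\ref{de2.1} matches the alternative viscosity-solution definition of \cite{jk1}, Lemma~4.8. So there is no in-house argument to compare yours against; what can be compared is your sketch versus the actual structure of the cited proof, and there your sketch captures the right ingredients (doubling, classical Crandall--Ishii matrix inequality, the mixed $I^{1,\delta}/I^{2,\delta}$ definition) and correctly locates the technical core in \cite{jk1}.

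That said, there is a genuine gap in the way you glue the two halves together. Your $C^2$-pasting function $\bar u$ has $D^2\bar u(\hat x)=D^2_{xx}\psi(\hat x,\hat y)$, \emph{not} the matrix $X$ produced by the Crandall--Ishii step; so if you test the subsolution property with $\bar u$ you get the inequality with $D^2_{xx}\psi(\hat x,\hat y)$ in the Hessian slot, not with $X$. Conversely, $(D_x\psi(\hat x,\hat y),X)\in\overline{J}^{2,+}u(\hat x)$ is a limit along a sequence $x_n\to\hat x$, and the nonlocal term $I^{2,\delta}[x_n,\cdot,u]$ must be shown to pass to the limit, which is not automatic for a merely USC $u$; one also has to produce a \emph{single} sequence along which both the matrix data and the nonlocal data converge. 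This coupling is precisely what the sup/inf-convolution regularization in \cite{jk1} (and in Barles--Imbert \cite{BI}) is designed to handle, and it cannot be separated into ``Crandall--Ishii first, then nonlocal definition'' as two independent steps. You acknowledge this as ``packaging,'' but it is actually the content of the proof, not just packaging. A secondary issue: in your construction the pasting radius for $\bar u$ (in the $z$-variable) is taken equal to the $\xi$-split radius $\delta$, but $|\xi|<\delta$ does not imply $|j(\hat x,\xi)|<\delta$; one needs to decouple the two radii using $\lim_{\xi\to0}\rho(\xi)=0$ and the consistency-in-$\delta$ of the definition (which is the content of \cite{jk1}, Lemma~4.8). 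Neither issue is fatal to the overall strategy, but as written the argument does not go through without importing the machinery of \cite{jk1} that you are trying to reconstruct.
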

\begin{remark}\label{re3.1}
The statement of Lemma $\ref{le3.1}$ is weaker than Theorem $4.9$ in \cite{jk1}. By Theorem $4.9$ in \cite{jk1}, the same result as Lemma $\ref{le3.1}$ is also true for Bellman-Isaacs equations ($\ref{eq..1.6}$).
\end{remark}
\begin{lemma}\label{le..3.2}
Suppose that a L\'{e}vy measure $\mu$ satisfies ($\ref{eq:1.5}$) and $j(x,\xi)$ satisfies assumption ($H2$). Then we have 
\begin{eqnarray}
 M_1:&=&\sup_{x\not=y}\Big\{|x-y|^{-\theta}\int_{\mathbb R^n}\Big[|x-y+j(x,\xi)-j(y,\xi)|^\theta-|x-y|^\theta\nonumber\\
&&-\mathbbm{1}_{B_1(0)}(\xi)\theta|x-y|^{\theta-2}(x-y)\cdot\big(j(x,\xi)-j(y,\xi)\big)\Big]\mu(d\xi)\Big\}<+\infty.\label{eq,3.1}
\end{eqnarray}
\end{lemma}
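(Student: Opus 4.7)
\emph{Proof plan.} Let $a := x - y \ne 0$ and $h = h(\xi) := j(x,\xi) - j(y,\xi)$, so that $(H2)$ yields $|h(\xi)| \leq |a|\rho(\xi)$, and set $F(z) := |z|^\theta$, which is smooth on $\mathbb R^n \setminus \{0\}$ and obeys the Hessian bound $|D^2 F(z)| \leq C_\theta |z|^{\theta - 2}$. The strategy is to split $\mathbb R^n$ into the three disjoint regions
\[
E_1 := \{|\xi| < 1,\, \rho(\xi) < 1/2\},\quad E_2 := \{|\xi| < 1,\, \rho(\xi) \geq 1/2\},\quad E_3 := \{|\xi| \geq 1\},
\]
and to show that on each the integrand in (\ref{eq,3.1}) is bounded by $C|a|^\theta \rho(\xi)^2$, at which point integrability follows from (\ref{eq:1.5}) and the finiteness of $\mu(B_1^c(0))$. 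The splitting is made by the size of $\rho$, not by $|\xi|$, because this is precisely what ensures $|h| < |a|/2$ on $E_1$, keeping the segment $[a, a+h]$ inside $B_{|a|/2}(a)$ and away from the singularity of $F$.

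On $E_1$, a second-order Taylor expansion of $F$ at $a$ is valid along the entire segment $[a, a+h]$, and the Hessian bound gives
\[
\bigl| F(a+h) - F(a) - \nabla F(a) \cdot h \bigr| \leq C_\theta |a|^{\theta - 2} |h|^2 \leq C_\theta |a|^\theta \rho(\xi)^2.
\]
On $E_2$ and $E_3$ the Taylor step is unavailable because $|h|$ may be comparable to or larger than $|a|$, so I would instead use the subadditivity $(u+v)^\theta \leq u^\theta + v^\theta$ for $u,v \geq 0$ and $\theta \in (0,1]$, which combined with the reverse triangle inequality yields $\bigl||a+h|^\theta - |a|^\theta\bigr| \leq |h|^\theta \leq |a|^\theta \rho(\xi)^\theta$. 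On $E_2$ the missing gradient correction is bounded by $\theta|a|^{\theta-1}|h| \leq \theta|a|^\theta \rho(\xi)$, and since $\rho(\xi) \geq 1/2$ on $E_2$ both terms are absorbed into $C|a|^\theta \rho(\xi)^2$. On $E_3$ there is no gradient correction, and $\rho(\xi)^\theta \leq 1 + \rho(\xi)^2$ together with $\mu(E_3) < +\infty$ and $\rho^2 \in L^1(\mu)$ closes the estimate.

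Dividing the three contributions by $|a|^\theta$ and summing produces a bound independent of $a = x-y \ne 0$, giving $M_1 < +\infty$. The main technical point is the Taylor step on $E_1$: the naive bound $\rho(\xi)^\theta$ is not known to be $\mu$-integrable near $\xi = 0$, so exploiting the second-order cancellation with $\nabla F(a) \cdot h$ is essential, and it is precisely for this reason that the decomposition must be driven by the size of $\rho$ rather than by $|\xi|$.
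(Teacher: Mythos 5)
Your proof is correct and rests on the same core mechanism as the paper's: a second-order Taylor expansion of $F(z)=|z|^\theta$ wherever the increment $h=j(x,\xi)-j(y,\xi)$ satisfies $|h|<|a|/2$ (keeping the segment $[a,a+h]$ away from the singularity and producing the decisive factor $\rho(\xi)^2$), and crude triangle/subadditivity bounds plus finiteness of $\mu$ away from the origin on the remainder. The only genuine difference is the decomposition of $\xi$-space. The paper chooses a radius $\delta_1<1$ with $\sup_{B_{\delta_1}(0)}\rho\leq 1/2$ (possible because $\lim_{\xi\to 0}\rho(\xi)=0$), splits into $B_{\delta_1}(0)$ and $B_{\delta_1}^c(0)$, and on the annulus $B_1(0)\setminus B_{\delta_1}(0)$ relies on local boundedness of $\rho$ and finiteness of $\mu(B_{\delta_1}^c(0))$ to handle the $\rho^\theta$ and $\rho$ terms. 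You instead split by the level set $\{\rho<1/2\}$ intersected with the unit ball, yielding three regions, and on the middle region $E_2=\{|\xi|<1,\rho\geq 1/2\}$ you absorb both the $\rho^\theta$ term and the gradient-correction term $\rho$ into $\rho^2$ using $\rho\geq 1/2$. Both decompositions are valid and the two proofs overlap almost entirely; your variant has the minor structural advantage of never invoking $\lim_{\xi\to 0}\rho(\xi)=0$ directly, since $\mu(E_2)<\infty$ follows from $\int\rho(\xi)^2\mu(d\xi)<\infty$ and $\rho\geq 1/2$ on $E_2$. In the setting of the lemma this assumption is in force anyway, so the gain is cosmetic rather than substantive, but it is a clean way to organize the estimate.
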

\begin{proof}
We first define 
\begin{equation}\label{eq..3.2}
\phi(x,y)=|x-y|^\theta.
\end{equation}
By calculation, we have
\begin{equation}\label{eq:3.2}
D\phi(x,y)=\theta|x-y|^{\theta-2}\left(         
  \begin{array}{c}   
   x-y \\  
    y-x 
  \end{array}
\right),
\end{equation}
\begin{eqnarray}\label{eq:3.3}
D^2\phi(x,y)&=&\theta|x-y|^{\theta-2}\left(         
  \begin{array}{cc}   
    I  &   -I\\  
    -I   & I 
  \end{array}
\right)+
\theta(\theta-2)|x-y|^{\theta-4}\left(         
  \begin{array}{c}   
   x-y \\  
   y-x 
  \end{array}
\right)\otimes\left(         
  \begin{array}{c}   
   x-y \\  
    y-x 
  \end{array}
\right)\nonumber\\
&\leq&\theta|x-y|^{\theta-2}\left(         
  \begin{array}{cc}   
    I  &   -I\\  
    -I   & I 
  \end{array}
\right).
\end{eqnarray}
Since $\lim_{\xi\to 0}\rho(\xi)=0$, there exists a positive constant $\delta_1<1$ such that $\sup_{\xi\in B_{\delta_1}(0)}\rho(\xi)\leq \frac{1}{2}$. By ($\ref{eq:1.5}$), ($\ref{eq:3.2}$), ($\ref{eq:3.3}$) and ($H2$), we have, for any $x,y\in\mathbb R^n$ and $x\not=y$
\begin{eqnarray}
&&|x-y|^{-\theta}\int_{\mathbb R^n}\Big[|x-y+j(x,\xi)-j(y,\xi)|^\theta-|x-y|^\theta\nonumber\\
&&-\mathbbm{1}_{B_1(0)}(\xi)\theta|x-y|^{\theta-2}(x-y)\cdot\big(j(x,\xi)-j(y,\xi)\big)\Big]\mu(d\xi)\nonumber\\
&\leq&|x-y|^{-\theta}\theta \int_{B_{\delta_1}(0)}\Big(\sup_{0\leq t\leq 1}|x-y+t(j(x,\xi)-j(y,\xi))|^{\theta-2}|j(x,\xi)-j(y,\xi)|^2\Big)\mu(d\xi)\nonumber\\
&&+|x-y|^{-\theta}\int_{\mathbb R^n\setminus B_{\delta_1}(0)}\Big[|x-y+j(x,\xi)-j(y,\xi)|^{\theta}-|x-y|^\theta\nonumber\\
&&-\mathbbm{1}_{B_1(0)}(\xi)\theta|x-y|^{\theta-2}(x-y)\cdot\big(j(x,\xi)-j(y,\xi)\big)\Big]\mu(d\xi)\nonumber\\
&\leq&2^{2-\theta}\theta\int_{B_{\delta_1}(0)}\rho(\xi)^2\mu(d\xi)+\int_{\mathbb R^n\setminus B_{\delta_1}(0)}\rho(\xi)^\theta\mu(d\xi)+\theta\int_{B_1(0)\setminus B_{\delta_1}(0)}\rho(\xi)\mu(d\xi)<+\infty.\label{eq,,3.10}
\end{eqnarray}
\end{proof}
\begin{theorem}\label{th2.1}
Suppose that the nonlinearity $G$ in ($\ref{eq1.1}$) is continuous, and satisfies ($\ref{eq:1.5}$)-($\ref{eq1.2}$) and ($H1$). Suppose that $j(x,\xi)$ satisfies assumption ($H2$). Then, if $u\in BUC(\mathbb R^n)$ is a viscosity solution of ($\ref{eq1.1}$) and $\gamma>\Lambda M_1+C_1$ where $M_1$ is defined in ($\ref{eq,3.1}$), we have $u\in C^{0,\theta}(\bar{\mathbb R}^n)$.
\end{theorem}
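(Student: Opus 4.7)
The plan is the classical doubling-of-variables technique adapted to the integro-differential setting. Assume, for contradiction, that $u$ is not globally $\theta$-H\"older on $\mathbb R^n$. For $L>0$ large and $\eta>0$ small, introduce
\[
\psi(x,y)=L|x-y|^\theta+\eta|x|^2,
\]
whose asymmetry in $x$ is dictated precisely by the structure of $(H1)$. The function $\Phi(x,y):=u(x)-u(y)-\psi(x,y)$ is upper semicontinuous and coercive ($\Phi\to-\infty$ as $|x|\to\infty$ thanks to $\eta|x|^2$, and as $|y|\to\infty$ with $|x|$ bounded thanks to $L|x-y|^\theta$), so $M:=\sup\Phi$ is attained at some $(\hat x,\hat y)$. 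Because $u$ fails to be $\theta$-H\"older, $L$ and $\eta$ can be chosen so that $M>0$, which forces $\hat x\neq\hat y$ and $u(\hat x)>u(\hat y)$.

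I then apply Lemma \ref{le3.1} with $g_0(x,y)=L\theta|x-y|^{\theta-2}$ and $g_1(x)=2\eta I$, producing $X,Y\in\mathbb S^n$ satisfying the matrix bound appearing in $(H1)$ at $(\hat x,\hat y)$. Using coercivity $(\ref{eq1.3})$ to shift the zeroth-order argument in the subsolution inequality from $u(\hat x)$ to $u(\hat y)$ and then applying $(H1)$ with $r=u(\hat y)$, one arrives at
\[
\gamma\bigl(u(\hat x)-u(\hat y)\bigr)\le \Lambda(l_x-l_y)+C_1(1+L)|\hat x-\hat y|^\theta+C_2\eta(1+|\hat x|^2).
\]
The main work is bounding $l_x-l_y$. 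From the global maximum property of $\Phi$ applied at the shifted points $(\hat x+j(\hat x,\xi),\hat y+j(\hat y,\xi))$, for every $\xi\in\mathbb R^n$,
\[
[u(\hat x+j(\hat x,\xi))-u(\hat x)]-[u(\hat y+j(\hat y,\xi))-u(\hat y)]\le L\bigl[|\hat x-\hat y+j(\hat x,\xi)-j(\hat y,\xi)|^\theta-|\hat x-\hat y|^\theta\bigr]+\eta\bigl[|\hat x+j(\hat x,\xi)|^2-|\hat x|^2\bigr].
\]
Subtracting the gradient corrections at $|\xi|<1$, integrating against $\mu$, and letting $\delta\to 0$ (so the smooth $I^{1,\delta}$-pieces involving $\psi$ vanish by Taylor expansion since $\psi\in C^2$ in a neighborhood of $(\hat x,\hat y)$), Lemma \ref{le..3.2} bounds the $L$-piece by $LM_1|\hat x-\hat y|^\theta$, while $(H2)$ and the integrability of $\rho^2$, together with Cauchy-Schwarz on $\int_{|\xi|\ge 1}\rho\,d\mu$, bound the $\eta$-piece by $\bar C\eta(1+|\hat x|^2)$ for a constant $\bar C$ depending on $\mu$.

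Substituting these bounds and using the identity $u(\hat x)-u(\hat y)=M+L|\hat x-\hat y|^\theta+\eta|\hat x|^2$, rearrangement yields
\[
(\gamma-\Lambda M_1-C_1)L|\hat x-\hat y|^\theta+\gamma\eta|\hat x|^2+\gamma M\le C_1|\hat x-\hat y|^\theta+(\Lambda\bar C+C_2)\eta(1+|\hat x|^2).
\]
The hypothesis $\gamma>\Lambda M_1+C_1$ makes the coefficient of $L|\hat x-\hat y|^\theta$ strictly positive. Since $L|\hat x-\hat y|^\theta\le u(\hat x)-u(\hat y)\le 2\|u\|_\infty$, we have $|\hat x-\hat y|\to 0$ as $L\to\infty$; uniform continuity of $u$ then gives $u(\hat x)-u(\hat y)\to 0$, and therefore $\eta|\hat x|^2\to 0$ as well. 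Choosing $L$ large enough to absorb the $C_1|\hat x-\hat y|^\theta$ term into $(\gamma-\Lambda M_1-C_1)L|\hat x-\hat y|^\theta$ and then sending $\eta\to 0$ forces $\gamma M\le 0$, contradicting $M>0$ and establishing the desired $\theta$-H\"older estimate.

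The principal technical difficulty lies in the nonlocal estimate for $l_x-l_y$: the gradient corrections at $|\xi|<1$ must be reorganized so as to reproduce exactly the integrand controlled by $M_1$ in Lemma \ref{le..3.2}, which is where the sharp threshold $\gamma>\Lambda M_1+C_1$ enters. A secondary but delicate point is the handling of the asymmetric $\eta|x|^2$ penalty, whose residual contribution could a priori remain of order $\|u\|_\infty$ since $\eta|\hat x|^2$ is only known to be bounded; this is resolved by combining uniform continuity of $u$ with the a priori bound $L|\hat x-\hat y|^\theta\le 2\|u\|_\infty$ to drive all residual $\eta$-terms to zero in the final limit.
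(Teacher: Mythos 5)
Your overall strategy is the same as the paper's: doubling of variables with the penalization $\psi(x,y)=L|x-y|^\theta+\eta|x|^2$, the nonlocal Jensen--Ishii lemma (Lemma \ref{le3.1}), the pointwise inequality (\ref{eq3.5}) coming from the global maximum of $\Phi$, and Lemma \ref{le..3.2} to absorb the nonlocal term into $M_1$. Your rearranged inequality (after sending $\delta\to 0$) is correct, and concluding by forcing $\gamma M\le 0$ (after absorbing $C_1|\hat x-\hat y|^\theta$ into $(\gamma-\Lambda M_1-C_1)L|\hat x-\hat y|^\theta$) is equivalent in spirit to the paper's conclusion $\gamma\le\Lambda M_1+C_1(1+1/L)$.

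However, there is a genuine gap in the step where you argue that the $\eta$-terms vanish. You write that $L|\hat x-\hat y|^\theta\le u(\hat x)-u(\hat y)\le 2\|u\|_\infty$ gives $|\hat x-\hat y|\to 0$ as $L\to\infty$, hence $u(\hat x)-u(\hat y)\to 0$ by uniform continuity, and therefore $\eta|\hat x|^2\to 0$. That argument gives smallness of $\eta|\hat x|^2$ \emph{uniformly in $\eta$ as $L\to\infty$}. But in your final step you \emph{fix} a large $L$ (chosen to absorb $C_1$) and then send $\eta\to 0$; for that limit you need $\eta(1+|\hat x_\eta|^2)\to 0$ \emph{as $\eta\to 0$ with $L$ fixed}, which is a different statement and is not delivered by your argument. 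In fact, keeping $L$ fixed, your bound only shows $\eta|\hat x_\eta|^2\le\omega\bigl((2\|u\|_\infty/L)^{1/\theta}\bigr)$, a constant that does not tend to zero with $\eta$. If you try instead to send $L\to\infty$, the max value $M(L,\eta)\le u(\hat x)-u(\hat y)\to 0$ as well, and no contradiction results --- both sides of your final inequality collapse to zero. To close the argument you need the paper's separate claim: for fixed $L$, pick near-maximizers $(x_k,y_k)$ of the $\eta$-free functional $u(x)-u(y)-L|x-y|^\theta$ and compare with the $\eta$-penalized max to deduce $\Phi(\hat x,\hat y)\to\sup\{u(x)-u(y)-L|x-y|^\theta\}$ and therefore $\eta|\hat x|^2\to 0$ as $\eta\to 0$ for \emph{fixed} $L$. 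With that one repair, your rearrangement then gives $\gamma A(L)\le 0$ where $A(L)=\sup_{x,y}[u(x)-u(y)-L|x-y|^\theta]$, contradicting the assumed failure of the H\"older bound, and the rest of your proof goes through.
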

\begin{proof} 
Let $\Phi(x,y)=u(x)-u(y)-\psi(x,y)$ where $\psi(x,y)=L\phi(x,y)+\eta|x|^2$ and $\phi(x,y)$ is defined in ($\ref{eq..3.2}$). We want to prove, for any $\eta>0$, we have $\Phi(x,y)\leq 0$ for all $x,y\in\mathbb R^n$ and some fixed sufficiently large $L$. Otherwise, there exists a positive constant $\eta_0$ such that $\sup_{x,y\in\mathbb R^n}\Phi(x,y)>0$ if $0<\eta<\eta_0$. By boundedness of $u$, there is a point ($\hat x,\hat y$) such that $\Phi(\hat x,\hat y)=\sup_{x,y\in\mathbb R^n}\Phi(x,y)>0$. Therefore, we have 
\begin{equation}\label{eq3.1}
\max\{\eta|\hat x|^2,L|\hat x-\hat y|^{\theta}\}< u(\hat x)-u(\hat y).
\end{equation} 
By ($\ref{eq:3.2}$) and ($\ref{eq:3.3}$), we obtain
\begin{equation*}
D^2\psi(\hat x,\hat y)\leq \theta L|\hat x-\hat y|^{\theta-2}\left(         
  \begin{array}{cc}   
    I  &   -I\\  
    -I   & I 
  \end{array}
\right)+2\eta\left(         
  \begin{array}{cc}   
    I   &  \,\,0\\  
    0 &   \,\,0 
  \end{array}
\right).
\end{equation*}
By Lemma $\ref{le3.1}$, since $u\in BUC(\mathbb R^n)$ is a viscosity solution of ($\ref{eq1.1}$), for any $0<\delta<1$ and $\epsilon_0>0$, there are $X,Y\in\mathbb S^n$ satisfying
\begin{equation}\label{eq:3.4}
\left(         
  \begin{array}{cc}   
    X  &   0\\  
    0   &   -Y 
  \end{array}
\right)-2\eta\left(         
  \begin{array}{cc}   
    I  &  \,\, 0\\  
    0   &   \,\,0 
  \end{array}
\right)
\leq (1+\epsilon_0)\theta L|\hat x-\hat y|^{\theta-2}\left(         
  \begin{array}{cc}   
    I  &   -I\\  
    -I   & I 
  \end{array}
\right),
\end{equation}
such that
\begin{equation*}
G\big(\hat x,u(\hat x),LD_x\phi(\hat x,\hat y)+2\eta\hat x,X,l_{\hat x}\big)\leq 0,
\end{equation*}
\begin{equation*}
G\big(\hat y,u(\hat y),-LD_y\phi(\hat x,\hat y),Y,l_{\hat y}\big)\geq 0,
\end{equation*}
where 
\begin{equation*}
l_{\hat x}=I^{1,\delta}[\hat x,LD_x\phi(\hat x,\hat y)+2\eta\hat x,L\phi(\cdot,\hat y)+\eta|\cdot|^2]+I^{2,\delta}[\hat x,LD_x\phi(\hat x,\hat y)+2\eta\hat x,u(\cdot)],
\end{equation*}
\begin{equation*}
l_{\hat y}=I^{1,\delta}[\hat y,-LD_y\phi(\hat x,\hat y),-L\phi(\hat x,\cdot)]+I^{2,\delta}[\hat y,-LD_y\phi(\hat x,\hat y),u(\cdot)].
\end{equation*}
Thus, by ($\ref{eq1.3}$), ($\ref{eq3.1}$) and ($H1$), we have
\begin{eqnarray}
\gamma L|\hat x-\hat y|^{\theta}&\leq&\gamma \big(u(\hat x)-u(\hat y)\big)\nonumber\\
&\leq&G\big(\hat y,u(\hat y),-LD_y\phi(\hat x,\hat y),Y,l_{\hat y}\big)-G\big(\hat x,u(\hat y),LD_x\phi(\hat x,\hat y)+2\eta\hat x,X,l_{\hat x}\big)\nonumber\\
&\leq&\Lambda(l_{\hat x}-l_{\hat y})+C_1(1+L)|\hat x-\hat y|^\theta+C_2\eta(1+|\hat x|^2).\label{eq::3.5}
\end{eqnarray}
Now we focus on estimating the integral term $l_{\hat x}-l_{\hat y}$. Thus,
\begin{eqnarray*}
l_{\hat x}-l_{\hat y}&=&L\int_{B_\delta(0)}\Big[|\hat x-\hat y+j(\hat x,\xi)|^\theta-|\hat x-\hat y|^\theta-\theta|\hat x-\hat y|^{\theta-2}(\hat x-\hat y)\cdot j(\hat x,\xi)\Big]\mu(d\xi)\\
&&+L\int_{B_\delta(0)}\Big[|\hat y-\hat x+j(\hat y,\xi)|^\theta-|\hat y-\hat x|^\theta-\theta|\hat y-\hat x|^{\theta-2}(\hat y-\hat x)\cdot j(\hat y,\xi)\Big]\mu(d\xi)\\
&&+\eta\int_{B_\delta(0)}\Big(|\hat x+j(\hat x,\xi)|^2-|\hat x|^2-2\hat x\cdot j(\hat x,\xi)\Big)\mu(d\xi)\\
&&+\int_{B_{\delta}^c(0)}\Big[u(\hat x+j(\hat x,\xi))-u(\hat x)-u(\hat y+j(\hat y,\xi))+u(\hat y)\\
&&-\mathbbm{1}_{B_1(0)}(\xi)\big(\theta L|\hat x-\hat y|^{\theta-2}(\hat x-\hat y)\big)\cdot\big(j(\hat x,\xi)-j(\hat y,\xi)\big)-\mathbbm{1}_{B_1(0)}(\xi)2\eta\hat x\cdot j(\hat x,\xi)\Big]\mu(d\xi).
\end{eqnarray*}
Since $\Phi(x,y)$ attains a global maximum at ($\hat x,\hat y$), we have, for any $\xi\in\mathbb R^n$,
\begin{eqnarray}\label{eq3.5}
&&u(\hat x+j(\hat x,\xi))-u(\hat x)-u(\hat y+j(\hat y,\xi))+u(\hat y)\nonumber\\
&\leq& L\Big(|\hat x-\hat y+j(\hat x,\xi)-j(\hat y,\xi)|^\theta-|\hat x-\hat y|^\theta\Big)+\eta\Big(|\hat x+j(\hat x,\xi)|^2-|\hat x|^2\Big).
\end{eqnarray}
Thus, by ($\ref{eq:3.3}$) and ($\ref{eq3.5}$), we have
\begin{eqnarray}\label{eq::::3.7}
l_{\hat x}-l_{\hat y}&\leq&\theta L\int_{B_{\delta}(0)}\Big(\sup_{0\leq t\leq1}|\hat x-\hat y+tj(\hat x,\xi)|^{\theta-2}|j(\hat x,\xi)|^2+\sup_{0\leq t\leq1}|\hat y-\hat x+tj(\hat y,\xi)|^{\theta-2}|j(\hat y,\xi)|^2\Big)\mu(d\xi)\nonumber\\
&&+\eta\int_{\mathbb R^n}\Big(|\hat x+j(\hat x,\xi)|^2-|\hat x|^2-\mathbbm{1}_{B_1(0)}(\xi)2\hat x\cdot j(\hat x,\xi)\Big)\mu(d\xi)\nonumber\\
&&+L\int_{B_\delta^c(0)}\Big[|\hat x-\hat y+j(\hat x,\xi)-j(\hat y,\xi)|^\theta-|\hat x-\hat y|^\theta\nonumber\\
&&-\mathbbm{1}_{B_1(0)}(\xi)\theta|\hat x-\hat y|^{\theta-2}(\hat x-\hat y)\cdot\big(j(\hat x,\xi)-j(\hat y,\xi)\big)\Big]\mu(d\xi).
\end{eqnarray}
We claim that $\eta|\hat x|^2\to 0$ as $\eta\to 0$. Since $u$ is bounded in $\mathbb R^n$, for any positive integer $k$, let $(x_k,y_k)$ be a point such that 
\begin{equation*}
u(x_k)-u(y_k)-L\phi(x_k,y_k)\geq M-\frac{1}{k},
\end{equation*}
where $M:=\sup_{x,y\in\mathbb R^n}\{u(x)-u(y)-L\phi(x,y)\}<+\infty$. Thus,
\begin{equation}\label{e3.11}
M-\frac{1}{k}-\eta|x_k|^2\leq \Phi(x_k,y_k)\leq\Phi(\hat x,\hat y)\leq M.
\end{equation}
Letting $\eta\to 0$ and then letting $k\to +\infty$ in ($\ref{e3.11}$), we have $\lim_{\eta\to 0}\Phi(\hat x,\hat y)=M$. If we notice that
\begin{equation*}
\Phi(\hat x,\hat y)+\eta|\hat x|^2=u(\hat x)-u(\hat y)-L\phi(\hat x,\hat y)\leq M,\quad\forall \eta>0,
\end{equation*} 
the claim follows. Since $u\in BUC(\mathbb R^n)$ and ($\ref{eq3.1}$) holds, we have 
\begin{equation*}
\epsilon_1\leq|\hat x-\hat y|\leq \epsilon_1^{-1},
\end{equation*}
where $\epsilon_1$ is a positive constant independent of $\eta$. Letting $\delta\rightarrow 0$ and then letting $\eta\rightarrow 0$ in ($\ref{eq::3.5}$), we have, by ($\ref{eq:1.5}$), ($\ref{eq::::3.7}$) and ($H2$),
\begin{eqnarray*}
\gamma L|\hat x-\hat y|^{\theta}&\leq&\Lambda L\int_{\mathbb R^n}\Big[|\hat x-\hat y+j(\hat x,\xi)-j(\hat y,\xi)|^\theta-|\hat x-\hat y|^\theta\\
&&-\mathbbm{1}_{B_1(0)}(\xi)\theta|\hat x-\hat y|^{\theta-2}(\hat x-\hat y)\cdot\big(j(\hat x,\xi)-j(\hat y,\xi)\big)\Big]\mu(d\xi)+C_1(1+L)|\hat x-\hat y|^\theta.
\end{eqnarray*}
Therefore, by Lemma $\ref{le..3.2}$,
\begin{eqnarray}\label{eq,,3.9}
\gamma&\leq&\Lambda|\hat x-\hat y|^{-\theta}\int_{\mathbb R^n}\Big[|\hat x-\hat y+j(\hat x,
\xi)-j(\hat y,\xi)|^\theta-|\hat x-\hat y|^\theta\nonumber\\
&&-\mathbbm{1}_{B_1(0)}(\xi)\theta|\hat x-\hat y|^{\theta-2}(\hat x-\hat y)\cdot\big(j(\hat x,\xi)-j(\hat y,\xi)\big)\Big]\mu(d\xi)+C_1(1+\frac{1}{L})\nonumber\\
&\leq&\Lambda M_1+C_1(1+\frac{1}{L})<+\infty,
\end{eqnarray}
where $M_1$ is defined in ($\ref{eq,3.1}$). It is now obvious from ($\ref{eq,,3.9}$) that, if $\gamma>\Lambda M_1+C_1$, we can find a sufficiently large $L$ such that we have a contradiction. Therefore, we have $u\in C^{0,\theta}(\bar{\mathbb R}^n)$.
\end{proof}

Let us consider another important fully nonlinear integro-PDE appearing in the study of stochastic optimal control and stochastic differential games for processes with jumps, namely the Bellman-Isaacs equation ($\ref{eq..1.6}$). Equation ($\ref{eq..1.6}$) is not of the same form as ($\ref{eq1.1}$), which means that the following theorem is not a corollary of Theorem $\ref{th2.1}$.
\begin{theorem}\label{th2.2}
Suppose that $c_{\alpha\beta}\geq\gamma$ in $\mathbb R^n$ uniformly in $\alpha\in\mathcal{A},\beta\in\mathcal{B}$. Suppose that the L\'evy measure $\mu$ satisfies ($\ref{eq:1.5}$), and the family $\{j_{\alpha\beta}(x,\xi)\}$ satisfies assumption ($H2$) uniformly in $\alpha\in \mathcal A,\beta\in\mathcal B$. Suppose moreover that there exist a positive constant $C$ and $0<\theta\leq 1$ such that 
\begin{equation}\label{eq::3.8}
\sup_{\alpha\in\mathcal{A},\beta\in\mathcal{B}}\max\{|\sigma_{\alpha\beta}(0)|,|b_{\alpha\beta}(0)|\}<C,
\end{equation}
and
\begin{equation}\label{eq::3.9}
\sup_{\alpha\in\mathcal{A},\beta\in\mathcal{B}}\max\{[\sigma_{\alpha\beta}]_{0,1;\mathbb R^n},[b_{\alpha\beta}]_{0,1;\mathbb R^n},[c_{\alpha\beta}]_{0,\theta,\mathbb R^n},[f_{\alpha\beta}]_{0,\theta,\mathbb R^n}\}<+\infty.
\end{equation}
Then, if $u\in BUC(\mathbb R^n)$ is a viscosity solution of ($\ref{eq..1.6}$) and $\gamma>N_1$ where 
\begin{eqnarray}
N_1:&=&\sup_{x\not=y}\sup_{\alpha\in\mathcal{A},\beta\in\mathcal{B}}\Big\{\theta |x-y|^{-2}Tr\Big[\big(\sigma_{\alpha\beta}(x)-\sigma_{\alpha\beta}(y)\big)(\sigma_{\alpha\beta}(x)-\sigma_{\alpha\beta}(y)\big)^T\Big]\nonumber\\
&&+\theta |x-y|^{-2}\Big(b_{\alpha\beta}(y)-b_{\alpha\beta}(x)\Big)\cdot(x-y)+|x-y|^{-\theta}\int_{\mathbb R^n}\Big[|x-y+j_{\alpha\beta}(x,\xi)-j_{\alpha\beta}(y,\xi)|^\theta\nonumber\\
&&-|x-y|^\theta-\mathbbm{1}_{B_1(0)}(\xi)\theta|x-y|^{\theta-2}(x-y)\cdot\big(j_{\alpha\beta}(x,\xi)-j_{\alpha\beta}(y,\xi)\big)\Big]\mu(d\xi)\Big\}<+\infty,\label{eq,3.12}
\end{eqnarray}
we have $u\in C^{0,\theta}(\bar{\mathbb R}^n)$.
\end{theorem}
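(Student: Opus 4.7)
The plan is to mimic the proof of Theorem \ref{th2.1}, using the doubling of variables with the auxiliary function $\psi(x,y)=L|x-y|^\theta+\eta|x|^2$ and arguing by contradiction. We suppose that $u$ fails to be $\theta$-H\"older for every $L$, so that for sufficiently small $\eta$ the function $\Phi(x,y)=u(x)-u(y)-\psi(x,y)$ attains a positive global maximum at some $(\hat x,\hat y)$. As in Theorem \ref{th2.1}, the inequality $L|\hat x-\hat y|^\theta<u(\hat x)-u(\hat y)\leq 2|u|_{0;\mathbb R^n}$ gives an upper bound on $|\hat x-\hat y|$, while uniform continuity of $u$ yields a lower bound $\epsilon_1\leq|\hat x-\hat y|$ independent of $\eta$, and a repetition of the argument around \eqref{e3.11} shows $\eta|\hat x|^2\to 0$ as $\eta\to 0$.

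Next, I would apply Remark \ref{re3.1} to obtain $X,Y\in\mathbb S^n$ satisfying \eqref{eq:3.4}, together with the viscosity sub/supersolution inequalities for the Bellman-Isaacs operator at $(\hat x,\hat y)$. To exploit the $\sup_\alpha\inf_\beta$ structure, for each $\epsilon>0$ I would first pick $\alpha^\epsilon$ from the supersolution side so that the inner infimum at $\hat y$ is $\geq -\epsilon$, and then pick $\beta^\epsilon$ from the subsolution side (with this same $\alpha^\epsilon$) so that the corresponding quantity at $\hat x$ is $\leq \epsilon$. Subtracting these two inequalities for the common pair $(\alpha^\epsilon,\beta^\epsilon)$ and estimating term by term:
\begin{itemize}
\item The zero order term gives $c_{\alpha^\epsilon\beta^\epsilon}(\hat x)(u(\hat y)-u(\hat x))+(c_{\alpha^\epsilon\beta^\epsilon}(\hat y)-c_{\alpha^\epsilon\beta^\epsilon}(\hat x))u(\hat y)\leq -\gamma L|\hat x-\hat y|^\theta+[c_{\alpha^\epsilon\beta^\epsilon}]_{0,\theta}|u|_{0;\mathbb R^n}|\hat x-\hat y|^\theta$.
\item The source term is controlled by $[f_{\alpha^\epsilon\beta^\epsilon}]_{0,\theta}|\hat x-\hat y|^\theta$.
\item For the drift, using $-D_y\psi(\hat x,\hat y)=L\theta|\hat x-\hat y|^{\theta-2}(\hat x-\hat y)$, the difference produces exactly $L|\hat x-\hat y|^\theta$ times the drift piece appearing in $N_1$, plus a remainder $-2\eta b_{\alpha^\epsilon\beta^\epsilon}(\hat x)\cdot\hat x$ that is bounded by $C\eta(1+|\hat x|^2)$ via \eqref{eq::3.8}, \eqref{eq::3.9}.
\item For the second order term, sandwiching $\eqref{eq:3.4}$ between $(\sigma_{\alpha^\epsilon\beta^\epsilon}(\hat x),\sigma_{\alpha^\epsilon\beta^\epsilon}(\hat y))$ and taking the trace yields $L|\hat x-\hat y|^\theta$ times the first term of $N_1$, up to a factor $(1+\epsilon_0)$, plus an error $2\eta\,Tr(\sigma_{\alpha^\epsilon\beta^\epsilon}(\hat x)\sigma_{\alpha^\epsilon\beta^\epsilon}^T(\hat x))$ bounded by $C\eta(1+|\hat x|^2)$.
\item For the nonlocal term, I would repeat the computation from \eqref{eq::::3.7}, using that $\Phi$ has a global maximum at $(\hat x,\hat y)$, and let $\delta\to 0$; the surviving integral is precisely $L|\hat x-\hat y|^\theta$ times the nonlocal piece of $N_1$, plus an $\eta$-error which vanishes as $\eta\to 0$.
\end{itemize}

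Combining these estimates, dividing by $L|\hat x-\hat y|^\theta\geq L\epsilon_1^\theta$, and then sending $\delta\to 0$, $\eta\to 0$, $\epsilon_0\to 0$, $\epsilon\to 0$, I would obtain an inequality of the form
\begin{equation*}
\gamma \leq N_1+\frac{C([c]_{0,\theta;\mathbb R^n}|u|_{0;\mathbb R^n}+[f]_{0,\theta;\mathbb R^n})}{L},
\end{equation*}
where the second summand can be made arbitrarily small by choosing $L$ large. Since $\gamma>N_1$ by hypothesis, this is a contradiction, proving that $u\in C^{0,\theta}(\bar{\mathbb R}^n)$. Finiteness of $N_1$ itself follows from Lemma \ref{le..3.2} together with the Lipschitz estimates on $\sigma_{\alpha\beta}$ and $b_{\alpha\beta}$.

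The main obstacle is the bookkeeping for the $\sup\inf$ structure: unlike in Theorem \ref{th2.1}, the nonlinearity is not of the abstract form $G(x,r,p,X,l)$, so we cannot simply invoke assumption ($H1$). We must select a single pair $(\alpha^\epsilon,\beta^\epsilon)$ that is admissible for both the sub and the supersolution inequalities simultaneously (via the standard trick of fixing $\alpha^\epsilon$ from the supersolution first and then using the $\inf_\beta$ from the subsolution), and we must verify that the $2\eta\hat x$ perturbation in $D_x\psi$ and the inhomogeneity in the matrix bound \eqref{eq:3.4} produce only error terms that vanish in the limit, uniformly in $(\alpha^\epsilon,\beta^\epsilon)$. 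Everything else is a careful bookkeeping exercise parallel to the proof of Theorem \ref{th2.1}.
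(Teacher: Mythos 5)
Your proposal is correct and follows essentially the same route as the paper's own proof: doubling variables with $\psi(x,y)=L|x-y|^\theta+\eta|x|^2$, invoking Remark \ref{re3.1} for the nonlocal Jensen--Ishii lemma, decomposing the difference of inequalities into local terms $L_{\alpha\beta}$ and nonlocal terms $N_{\alpha\beta}$, estimating each using \eqref{eq::3.8}, \eqref{eq::3.9}, \eqref{eq:3.4}, and the computation from \eqref{eq::::3.7}, and reaching $\gamma\leq N_1+C/L$ in the limit $\delta,\eta,\epsilon_0\to 0$, which contradicts $\gamma>N_1$ for $L$ large. The only cosmetic difference is that you explicitly extract a near-optimal pair $(\alpha^\epsilon,\beta^\epsilon)$, whereas the paper compresses this into the standard inequality $\sup_\alpha\inf_\beta B-\sup_\alpha\inf_\beta A\leq\sup_{\alpha,\beta}(B_{\alpha\beta}-A_{\alpha\beta})$ and writes $\gamma L|\hat x-\hat y|^\theta\leq\sup_{\alpha,\beta}\{L_{\alpha\beta}+N_{\alpha\beta}\}$ directly; the two are equivalent.
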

\begin{proof}
At the beginning of the proof, we will show that the  constant $N_1$ has an upper bound. By ($\ref{eq::3.9}$) and the estimates in ($\ref{eq,,3.10}$), we have
\begin{eqnarray*}
N_1&\leq&\theta\sup_{\alpha\in\mathcal{A},\beta\in\mathcal{B}}[\sigma_{\alpha\beta}]_{0,1;\mathbb R^n}^2+\theta\sup_{\alpha\in\mathcal{A},\beta\in\mathcal{B}}[b_{\alpha\beta}]_{0,1;\mathbb R^n}+2^{2-\theta}\theta\int_{B_{\delta_1}(0)}\rho(\xi)^2\mu(d\xi)\\
&&+\int_{\mathbb R^n\setminus B_{\delta_1}(0)}\rho(\xi)^\theta\mu(d\xi)+\theta\int_{B_1(0)\setminus B_{\delta_1}(0)}\rho(\xi)\mu(d\xi)<+\infty,
\end{eqnarray*}
where $\delta_1$ was chosen in Lemma $\ref{le..3.2}$.

Then we want to prove that, for any $\eta>0$, we have $\Phi(x,y)=u(x)-u(y)-\psi(x,y)\leq 0$ for all $x,y\in\mathbb R^n$ and some fixed sufficiently large $L$ where $\psi(x,y)$ is given in Theorem $\ref{th2.1}$. Otherwise, there exists a positive constant $\eta_0$ such that $\sup_{x,y\in\mathbb R^n}\Phi(x,y)>0$ if $0<\eta<\eta_0$. By boundedness of $u$, there is a point ($\hat x,\hat y$) such that $\Phi(\hat x,\hat y)=\sup_{x,y\in\mathbb R^n}\Phi(x,y)>0$. Therefore, we have ($\ref{eq3.1}$). By Remark $\ref{re3.1}$, since $u\in BUC(\mathbb R^n)$ is a viscosity solution of ($\ref{eq..1.6}$), for any $0<\delta<1$ and $\epsilon_0>0$, there are $X,Y\in\mathbb S^n$ satisfying ($\ref{eq:3.4}$) such that
\begin{equation*}
\sup_{\alpha\in \mathcal A}\inf_{\beta\in \mathcal{B}}\big\{-Tr\big(\sigma_{\alpha\beta}(\hat x)\sigma_{\alpha\beta}^T(\hat x)X\big)-l_{\hat x,\alpha\beta}+b_{\alpha\beta}(\hat x)\cdot D_x\psi(\hat x,\hat y)+c_{\alpha\beta}(\hat x)u(\hat x)+f_{\alpha\beta}(\hat x)\big\}\leq0,
\end{equation*}
\begin{equation*}
\sup_{\alpha\in \mathcal A}\inf_{\beta\in \mathcal{B}}\big\{-Tr\big(\sigma_{\alpha\beta}(\hat y)\sigma_{\alpha\beta}^T(\hat y)Y\big)-l_{\hat y,\alpha\beta}-b_{\alpha\beta}(\hat y)\cdot D_y\psi(\hat x,\hat y)+c_{\alpha\beta}(\hat y)u(\hat y)+f_{\alpha\beta}(\hat y)\big\}\geq0,
\end{equation*}
where 
\begin{equation*}
l_{\hat x,\alpha\beta}=I_{\alpha\beta}^{1,\delta}[\hat x,D_x\psi(\hat x,\hat y),\psi(\cdot,\hat y)]+I_{\alpha\beta}^{2,\delta}[\hat x,D_x\psi(\hat x,\hat y),u(\cdot)],
\end{equation*}
\begin{equation*}
l_{\hat y,\alpha\beta}=I_{\alpha\beta}^{1,\delta}[\hat y,-D_y\psi(\hat x,\hat y),-\psi(\hat x,\cdot)]+I_{\alpha\beta}^{2,\delta}[\hat y,-D_y\psi(\hat x,\hat y),u(\cdot)].
\end{equation*}
Since ($\ref{eq:3.2}$) and ($\ref{eq3.1}$) hold, and $c_{\alpha\beta}\geq \gamma$ in $\mathbb R^n$ uniformly in $\alpha\in\mathcal{A},\beta\in\mathcal{B}$, we have
\begin{equation}\label{eq3.7}
\gamma L|\hat x-\hat y|^\theta\leq\sup_{\alpha\in\mathcal{A},\beta\in\mathcal{B}}\Big\{L_{\alpha\beta}+N_{\alpha\beta}\Big\},
\end{equation}
where
\begin{eqnarray*}
L_{\alpha\beta}&=&Tr\Big(\sigma_{\alpha\beta}(\hat x)\sigma_{\alpha\beta}^T(\hat x)X-\sigma_{\alpha\beta}(\hat y)\sigma_{\alpha\beta}^T(\hat y)Y\Big)+\Big(b_{\alpha\beta}(\hat y)-b_{\alpha\beta}(\hat x)\Big)\cdot LD_x\phi(\hat x,\hat y)\\
&&+\Big(c_{\alpha\beta}(\hat y)-c_{\alpha\beta}(\hat x)\Big)u(\hat y)+f_{\alpha\beta}(\hat y)-f_{\alpha\beta}(\hat x)-2\eta b_{\alpha\beta}(\hat x)\cdot\hat x,
\end{eqnarray*}
and
\begin{equation*}
N_{\alpha\beta}=l_{\hat x,\alpha\beta}-l_{\hat y,\alpha\beta}.
\end{equation*}
By ($\ref{eq:3.4}$), ($\ref{eq::3.8}$) and ($\ref{eq::3.9}$), we see that (see also Example 3.6 in \cite{MHP})
\begin{eqnarray*}
&&Tr\Big(\sigma_{\alpha\beta}(\hat x)\sigma_{\alpha\beta}^T(\hat x)X-\sigma_{\alpha\beta}(\hat y)\sigma_{\alpha\beta}^T(\hat y)Y\Big)\\
&\leq& (1+\epsilon_0)\theta L|\hat x-\hat y|^{\theta-2}Tr\Big[\big(\sigma_{\alpha\beta}(\hat x)-\sigma_{\alpha\beta}(\hat y)\big)(\sigma_{\alpha\beta}(\hat x)-\sigma_{\alpha\beta}(\hat y)\big)^T\Big]+2\eta Tr\big(\sigma_{\alpha\beta}(\hat x)\sigma_{\alpha\beta}^T(\hat x)\big)\\
&\leq& (1+\epsilon_0)\theta L|\hat x-\hat y|^{\theta-2}Tr\Big[\big(\sigma_{\alpha\beta}(\hat x)-\sigma_{\alpha\beta}(\hat y)\big)(\sigma_{\alpha\beta}(\hat x)-\sigma_{\alpha\beta}(\hat y)\big)^T\Big]\\
&&+2\eta(C+\sup_{\alpha\in\mathcal{A},\beta\in\mathcal B}[\sigma_{\alpha\beta}]_{0,1;\mathbb R^n}|\hat x|)^2.
\end{eqnarray*}
Thus, we can estimate the local term $L_{\alpha\beta}$ easily. Using ($\ref{eq:3.2}$), ($\ref{eq::3.8}$), ($\ref{eq::3.9}$) and boundedness of $u$, we obtain
\begin{eqnarray}\label{eq::::3.12}
L_{\alpha\beta}&\leq&(1+\epsilon_0)\theta L|\hat x-\hat y|^{\theta-2}Tr\Big[\big(\sigma_{\alpha\beta}(\hat x)-\sigma_{\alpha\beta}(\hat y)\big)(\sigma_{\alpha\beta}(\hat x)-\sigma_{\alpha\beta}(\hat y)\big)^T\Big]\nonumber\\
&&+2\eta(C+\sup_{\alpha\in\mathcal{A},\beta\in\mathcal B}[\sigma_{\alpha\beta}]_{0,1;\mathbb R^n}|\hat x|)^2+\theta L|\hat x-\hat y|^{\theta-2}\Big(b_{\alpha\beta}(\hat y)-b_{\alpha\beta}(\hat x)\Big)\cdot (\hat x-\hat y)\nonumber\\
&&+\sup_{\alpha\in\mathcal{A},\beta\in\mathcal{B}}[c_{\alpha\beta}]_{0,\theta;\mathbb R^n}|u|_{0;\mathbb R^n}|\hat x-\hat y|^{\theta}+\sup_{\alpha\in\mathcal{A},\beta\in\mathcal{B}}[f_{\alpha\beta}]_{0,\theta;\mathbb R^n}|\hat x-\hat y|^{\theta}\nonumber\\
&&+2\eta(C|\hat x|+\sup_{\alpha\in\mathcal{A},\beta\in\mathcal B}[b_{\alpha\beta}]_{0,1;\mathbb R^n}|\hat x|^2).
\end{eqnarray}
Similarly as in the proof of Theorem $\ref{th2.1}$, we have $\eta|\hat x|^2\to 0$ as $\eta\to 0$ and
\begin{equation*}
\epsilon_1\leq|\hat x-\hat y|\leq \epsilon_1^{-1},
\end{equation*}
where $\epsilon_1$ is a positive constant independent of $\eta$.
Letting $\delta\to 0$, $\eta\to 0$ and $\epsilon_0\to 0$ in ($\ref{eq3.7}$), we have, by ($\ref{eq::::3.12}$) and the same estimates on the nonlocal term $N_{\alpha\beta}$ as Theorem $\ref{th2.1}$,
\begin{eqnarray*}
\gamma L|\hat x-\hat y|^\theta&\leq&\sup_{\alpha\in\mathcal{A},\beta\in\mathcal{B}}L\Big\{\theta|\hat x-\hat y|^{\theta-2}Tr\Big[\big(\sigma_{\alpha\beta}(\hat x)-\sigma_{\alpha\beta}(\hat y)\big)(\sigma_{\alpha\beta}(\hat x)-\sigma_{\alpha\beta}(\hat y)\big)^T\Big]\\
&&+\theta |\hat x-\hat y|^{\theta-2}\Big(b_{\alpha\beta}(\hat y)-b_{\alpha\beta}(\hat x)\Big)\cdot (\hat x-\hat y)+\int_{\mathbb R^n}\Big[|\hat x-\hat y+j_{\alpha\beta}(\hat x,\xi)-j_{\alpha\beta}(\hat y,\xi)|^\theta\\
&&-|\hat x-\hat y|^\theta-\mathbbm{1}_{B_1(0)}(\xi)\theta|\hat x-\hat y|^{\theta-2}(\hat x-\hat y)\cdot\big(j_{\alpha\beta}(\hat x,\xi)-j_{\alpha\beta}(\hat y,\xi)\big)\Big]\mu(d\xi)\Big\}\\
&&+\sup_{\alpha\in\mathcal{A},\beta\in\mathcal{B}}[c_{\alpha\beta}]_{0,\theta;\mathbb R^n}|u|_{0;\mathbb R^n}|\hat x-\hat y|^{\theta}+\sup_{\alpha\in\mathcal{A},\beta\in\mathcal{B}}[f_{\alpha\beta}]_{0,\theta;\mathbb R^n}|\hat x-\hat y|^{\theta}.
\end{eqnarray*}
Therefore,
\begin{eqnarray}\label{eq,,3.16}
\gamma&\leq&\sup_{\alpha\in\mathcal{A},\beta\in\mathcal{B}}\Big\{\theta|\hat x-\hat y|^{-2}Tr\Big[\big(\sigma_{\alpha\beta}(\hat x)-\sigma_{\alpha\beta}(\hat y)\big)(\sigma_{\alpha\beta}(\hat x)-\sigma_{\alpha\beta}(\hat y)\big)^T\Big]\nonumber\\
&&+\theta |\hat x-\hat y|^{-2}\Big(b_{\alpha\beta}(\hat y)-b_{\alpha\beta}(\hat x)\Big)\cdot(\hat x-\hat y)+|\hat x-\hat y|^{-\theta}\int_{\mathbb R^n}\Big[|\hat x-\hat y+j_{\alpha\beta}(\hat x,\xi)-j_{\alpha\beta}(\hat y,\xi)|^\theta\nonumber\\
&&-|\hat x-\hat y|^\theta-\mathbbm{1}_{B_1(0)}(\xi)\theta|\hat x-\hat y|^{\theta-2}(\hat x-\hat y)\cdot\big(j_{\alpha\beta}(\hat x,\xi)-j_{\alpha\beta}(\hat y,\xi)\big)\Big]\mu(d\xi)\Big\}\nonumber\\
&&+\frac{1}{L}\sup_{\alpha\in\mathcal{A},\beta\in\mathcal{B}}[c_{\alpha\beta}]_{0,\theta;\mathbb R^n}|u|_{0;\mathbb R^n}+\frac{1}{L}\sup_{\alpha\in\mathcal{A},\beta\in\mathcal{B}}[f_{\alpha\beta}]_{0,\theta;\mathbb R^n}\nonumber\\
&\leq&N_1+\frac{1}{L}\sup_{\alpha\in\mathcal{A},\beta\in\mathcal{B}}[c_{\alpha\beta}]_{0,\theta;\mathbb R^n}|u|_{0;\mathbb R^n}+\frac{1}{L}\sup_{\alpha\in\mathcal{A},\beta\in\mathcal{B}}[f_{\alpha\beta}]_{0,\theta;\mathbb R^n},
\end{eqnarray}
where $N_1$ is defined in ($\ref{eq,3.12}$). It now follows from ($\ref{eq,,3.16}$) that, if $\gamma>N_1$, we can find a sufficiently large $L$ such that we have a contradiction. Therefore, we have $u\in C^{0,\theta}(\bar{\mathbb R}^n)$.
\end{proof}

\section{Semiconcavity}
\label{sec:semi}

In this section we investigate the semiconcavity of viscosity solutions of ($\ref{eq1.1}$) and ($\ref{eq1.7}$). Again we start with equation ($\ref{eq1.1}$). We impose the following conditions on $G$ and $j(x,\xi)$.
\\
\\
($\bar{H}1$) If $\varphi\in C^{0,1}(\bar{\mathbb R}^n)$, there are a constant $1<\bar\theta\leq2$, a non-negative constant $\Lambda$ and two positive constants $C_3,C_4$ such that, for any $x,y,z\in\mathbb R^n$, $l_x,l_y,l_z\in\mathbb R$, $X,Y,Z\in\mathbb S^n$ and $L,\eta>0$, we have
\begin{eqnarray}\label{e4.1}
&&2G(z,\varphi(z),-\frac{L}{2}D_z\phi(x,y,z),\frac{Z}{2},l_z)\nonumber\\
&&-G(x,\varphi(x),LD_x\phi(x,y,z)+2\eta x,X,l_x)-G(y,\varphi(y),LD_y\phi(x,y,z),Y,l_y)\nonumber\\
&\leq&-\gamma\big(\varphi(x)+\varphi(y)-2\varphi(z)\big)+\Lambda(l_x+l_y-2l_z)+C_3(1+L)\phi(x,y,z)+C_4\eta(1+|x|^2),
\end{eqnarray}
if 
\begin{eqnarray}\label{e4.2}
\left(         
  \begin{array}{ccc}   
    X & 0 & 0  \\  
    0& Y & 0\\
   0 & 0 & -Z
  \end{array}
\right)
&\leq&\frac{L}{\phi(x,y,z)}\left[\bar\theta(2\bar\theta-1)|x-y|^{2\bar{\theta}-2}
\left(         
  \begin{array}{ccc}   
    I & -I &\,\, 0  \\  
    -I& I &\,\, 0\\
   0 & 0 & \,\,0
  \end{array}
\right)+
\left(         
  \begin{array}{ccc}   
    I & I & -2I  \\  
    I& I &   -2I\\
   -2I & -2I &  4I
  \end{array}
\right)\right]\nonumber\\
&&+2\eta\left(         
  \begin{array}{ccc}   
    I &\,\, 0 &\,\, 0  \\  
    0&\,\, 0 &\,\, 0\\
   0 &\,\, 0 &\,\,0
  \end{array}
\right),
\end{eqnarray}
where $\gamma$ is given by ($\ref{eq1.3}$) and $\phi(x,y,z)=(|x-y|^{2\bar\theta}+|x+y-2z|^2)^{\frac{1}{2}}$.\\
($\bar{H}2$) ($H2$) holds and, with the same $\bar\theta$ in ($\bar{H}1$) and for any $x,y\in\mathbb R^n$, we have
\begin{equation*}
|j(x,\xi)+j(y,\xi)-2j(\frac{x+y}{2},\xi)|\leq |x-y|^{\bar\theta}\rho(\xi)\quad\text{for}\,\,\xi\in \mathbb R^n.
\end{equation*}
\begin{example}
Since the assumption ($\bar{H}1$) is complicated, we provide a concrete example to show when it is satisfied. We consider the nonlinear convex nonlocal equation
\begin{equation}\label{eq::4.1}
-Tr\big(\sigma(x)\sigma^T(x)D^2u(x)\big)+F(I[x,u])+b(x)\cdot Du(x)+c(x)u(x)+f(x)=0,\quad \text{in}\,\,\mathbb R^n,
\end{equation}
where $F:\mathbb R\to\mathbb R$ is a continuous function. Suppose the following conditions are satisfied: there exists a non-negative constant $\Lambda$ such that, for any $l_x,l_y\in\mathbb R$,
\begin{equation*}
c\geq \gamma\,\,\text{in}\,\,\mathbb R^n\,\,\text{and}\,\,c\in C^{1,\bar\theta-1}(\bar{\mathbb R}^n),
\end{equation*} 
\begin{equation*}
f\,\,\text{is}\,\,\bar\theta\text{-semiconvex}\,\,\text{in}\,\,\mathbb R^n\,\,\text{and}\,\,\max\{[\sigma]_{0,1;\mathbb R^n},[\sigma]_{1,\bar\theta-1;\mathbb R^n},[b]_{0,1;\mathbb R^n},[b]_{1,\bar\theta-1,\mathbb R^n},[f]_{0,1,\mathbb R^n}\}<+\infty,
\end{equation*}
\begin{equation}\label{eq::4.2}
F\,\,\text{is}\,\,\text{convex}\,\,\text{in}\,\,\mathbb R^n\,\,\text{and}\,\,F(l_y)-F(l_x)\leq\Lambda(l_x-l_y).
\end{equation}
By the estimates on the local terms in Theorem $\ref{th4.2}$, if equation ($\ref{eq::4.1}$) does not contain the nonlocal term $F(I[x,u])$, then ($\ref{eq::4.1}$) satisfies ($\bar{H}1$). Thus, we only need to estimate the nonlocal terms. For any $l_x,l_y,l_z$, we have, by ($\ref{eq::4.2}$),
\begin{eqnarray*}
2F(l_z)-F(l_x)-F(l_y)&\leq&2F(l_z)-2F(\frac{l_x+l_y}{2})+\Big(2F(\frac{l_x+l_y}{2})-F(l_x)-F(l_y)\Big)\\
&\leq&\Lambda(l_x+l_y-2l_z).
\end{eqnarray*}
Therefore, equation ($\ref{eq::4.1}$) satisfies ($\bar{H}1$). 

This example can be generalized to equation
\begin{equation}\label{e4.5}
G(x,u,Du,D^2u)+F(I[x,u])=0,\quad \text{in}\,\,\mathbb R^n,
\end{equation}
where $G$ satisfies ($\ref{e4.1}$) without the last argument if $\varphi\in C^{0,1}(\bar{\mathbb R}^n)$ and ($\ref{e4.2}$) holds, and $F$ satisfies ($\ref{eq::4.2}$). It is obvious that ($\bar{H}1$) holds for equation ($\ref{e4.5}$).
\end{example}
\begin{lemma}\label{le4.1}
Suppose that the nonlinearity $G$ in ($\ref{eq1.1}$) is continuous and satisfies ($\ref{eq:1.5}$)-($\ref{eq1.2}$). Let $u,v,w$ be bounded functions and be respectively a viscosity subsolution, a viscosity subsolution and a viscosity supersolution of 
\begin{equation*}
G(x,u,Du,D^2u,I[x,u])=0,\quad\text{in}\,\,\mathbb R^n,
\end{equation*}
\begin{equation*}
G(x,v,Dv,D^2v,I[x,v])=0,\quad\text{in}\,\,\mathbb R^n,
\end{equation*}
\begin{equation*}
G(x,w,Dw,D^2w,I[x,w])=0,\quad\text{in}\,\,\mathbb R^n.
\end{equation*}
Let $\psi\in C^2(\mathbb R^{3n})$ and $(\hat x,\hat y,\hat z)\in\mathbb R^n\times\mathbb R^n\times\mathbb R^n$ be such that 
\begin{equation*}
(x,y,z)\mapsto u(x)+v(y)-2w(z)-\psi(x,y,z)
\end{equation*}
has a global maximum at $(\hat x,\hat y,\hat z)$. Furthermore, assume that in a neighborhood of $(\hat x,\hat y,\hat z)$ there are continuous functions $g_0,g_1:\mathbb R^{3n}\to \mathbb R$, $g_2:\mathbb R^{n}\to\mathbb S^n$ with $g_1(\hat x,\hat y,\hat z)>0$, satisfying 
\begin{equation*}
D^2\psi(x,y,z)\leq g_0(x,y,z)\left(         
  \begin{array}{ccc}   
    I  &   -I  &\,\, 0\\  
    -I   & I   & \,\, 0 \\
   0 & 0 & \,\,0
  \end{array}
\right)+g_1(x,y,z)
\left(         
  \begin{array}{ccc}   
    I  &   I & -2I\\  
    I   & I  &  -2I\\
   -2I  &-2I & 4I 
  \end{array}
\right)+\left(         
  \begin{array}{ccc}   
    g_2(x)  & \,\,  0  &\,\, 0\\  
    0   & \,\,0   & \,\, 0 \\
   0 & \,\,0 &\,\, 0
  \end{array}
\right).
\end{equation*}
Then, for any $0<\delta<1$ and $\epsilon_0>0$, there are $X,Y,Z\in\mathbb S^n$ satisfying
\begin{equation*}
\left(         
  \begin{array}{ccc}   
    X  &   0  &  0\\  
    0   & Y &  0\\
   0   &  0  &  -Z
  \end{array}
\right)-\left(         
  \begin{array}{ccc}   
    g_2(\hat x)  &   0  &   0\\  
    0   &   0   &  0\\
    0  &    0   &  0
  \end{array}
\right)
\leq (1+\epsilon_0)\left[g_0(\hat x,\hat y,\hat z)\left(         
  \begin{array}{ccc}   
    I  &   -I  & 0\\  
    -I   & I   &  0 \\
   0 & 0 & 0
  \end{array}
\right)+g_1(\hat x,\hat y,\hat z)
\left(         
  \begin{array}{ccc}   
    I  &   I & -2I\\  
    I   & I  &  -2I\\
   -2I  &-2I & 4I 
  \end{array}
\right)\right],
\end{equation*}
such that
\begin{equation*}
G\big(\hat x,u(\hat x),D_x\psi(\hat x,\hat y,\hat z),X,I^{1,\delta}[\hat x,D_x\psi(\hat x,\hat y,\hat z),\psi(\cdot,\hat y,\hat z)]+I^{2,\delta}[\hat x,D_x\psi(\hat x,\hat y,\hat z),u(\cdot)]\big)\leq 0,
\end{equation*}
\begin{equation*}
G\big(\hat y,v(\hat y),D_y\psi(\hat x,\hat y,\hat z),Y,I^{1,\delta}[\hat y,D_y\psi(\hat x,\hat y,\hat z),\psi(\hat x,\cdot,\hat z)]+I^{2,\delta}[\hat y,D_y\psi(\hat x,\hat y,\hat z),v(\cdot)]\big)\leq 0,
\end{equation*}
\begin{equation*}
G\big(\hat z,w(\hat z),-\frac{1}{2}D_z\psi(\hat x,\hat y,\hat z),\frac{Z}{2},I^{1,\delta}[\hat z,-\frac{D_z\psi(\hat x,\hat y,\hat z)}{2},-\frac{\psi(\hat x,\hat y,\cdot)}{2}]+I^{2,\delta}[\hat z,-\frac{D_z\psi(\hat x,\hat y,\hat z)}{2},w(\cdot)]\big)\geq 0.
\end{equation*}
\end{lemma}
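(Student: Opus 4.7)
The plan is to adapt the proof of Lemma~\ref{le3.1} (which is Theorem 4.9 of \cite{jk1}) from the two-variable doubling setting to the three-variable tripling setting. Only two ingredients are really needed: a local Crandall--Ishii maximum principle in $\mathbb{R}^{3n}$ to extract the matrices, together with a decomposition of each nonlocal operator at radius $\delta$ that separates a ``small jumps'' part (handled with the smooth test function $\psi$) from a ``large jumps'' part (handled with the raw functions $u,v,w$). The passage from matrix information to the three viscosity inequalities is then made by invoking the equivalent subjet formulation of viscosity sub/supersolutions recorded in Lemma 4.8 of \cite{jk1}.

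First I would apply the standard Crandall--Ishii lemma in $\mathbb{R}^{3n}$ to the upper semicontinuous function $(x,y,z)\mapsto u(x)+v(y)-2w(z)$, whose difference with $\psi$ attains a global maximum at $(\hat x,\hat y,\hat z)$. This produces matrices $X,Y,-Z\in\mathbb{S}^n$ sitting as the diagonal blocks of a symmetric $3n\times 3n$ matrix controlled by $D^2\psi(\hat x,\hat y,\hat z)$; combined with the structural upper bound assumed on $D^2\psi$, this is exactly the claimed block-matrix inequality, the factor $(1+\epsilon_0)$ being the usual loss from the Jensen perturbation used to regularize $u,v,w$.

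Next, for each of the three viscosity inequalities, I would split the associated L\'evy--It\^o operator at radius $\delta$. The one-variable slices $\psi(\cdot,\hat y,\hat z)$ and $\psi(\hat x,\cdot,\hat z)$ touch $u$ and $v$ from above at $\hat x$ and $\hat y$ respectively, while $-\psi(\hat x,\hat y,\cdot)/2$ touches $w$ from below at $\hat z$; these are the legitimate $C^2$ test functions used for the $|\xi|<\delta$ part, producing the $I^{1,\delta}$ terms written in the statement. For $|\xi|\ge\delta$ the global maximum property
\[
u(\hat x+j(\hat x,\xi))+v(\hat y+j(\hat y,\xi))-2w(\hat z+j(\hat z,\xi))-\psi\bigl(\hat x+j(\hat x,\xi),\hat y+j(\hat y,\xi),\hat z+j(\hat z,\xi)\bigr)\le u(\hat x)+v(\hat y)-2w(\hat z)-\psi(\hat x,\hat y,\hat z),
\]
together with $\mu(B_\delta^c(0))<+\infty$ implied by (\ref{eq:1.5}), legitimizes replacing the test function by $u,v,w$ themselves under the integral $I^{2,\delta}$. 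Applying Lemma 4.8 of \cite{jk1} in each of the three slots then delivers the three viscosity inequalities stated.

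The main obstacle is technical bookkeeping rather than any new idea: one has to carry through the Jensen-type perturbation (replace $u,v,w$ by appropriate sup/inf convolutions and perturb $\psi$ by a small quadratic to force a strict, twice-differentiable maximum) while simultaneously ensuring that both the matrix estimate and all three nonlocal integral estimates pass to the limit consistently, keeping track of the two subsolution signs and the single supersolution sign. Because the argument of Theorem 4.9 in \cite{jk1} depends only on the block structure of $D^2\psi$ and not on the specific number of variables beyond two, and because the three-variable Crandall--Ishii matrix lemma is a routine extension of its two-variable counterpart, no additional conceptual difficulty should arise.
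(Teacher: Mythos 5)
Your proposal is correct and follows essentially the same route as the paper, whose own proof is a single sentence deferring to the proof of Theorem 4.9 in \cite{jk1}. Your outline of the three-variable Jensen--Ishii argument (block-matrix extraction, $\delta$-splitting of the L\'evy--It\^o operator, the equivalent-definition Lemma 4.8 of \cite{jk1}, and the sup/inf-convolution regularization) is a faithful expansion of exactly that adaptation.
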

\begin{proof}
This lemma can be deduced from the proof of Theorem 4.9 in \cite{jk1}.
\end{proof}
\begin{remark}\label{re4.1}
Lemma $\ref{le4.1}$ is also true for Bellman-Isaacs equations ($\ref{eq..1.6}$).
\end{remark}
\begin{lemma}\label{le..4.2}
Suppose that a L\'{e}vy measure $\mu$ satisfies ($\ref{eq:1.5}$) and $j(x,\xi)$ satisfies assumption ($\bar{H}2$). Then
\begin{eqnarray}
M_2:&=&\sup_{\phi(x,y,z)\not=0}\Big\{\phi(x,y,z)^{-1}\int_{\mathbb R^n}\Big[\phi(x+j(x,\xi),y+j(y,\xi),z+j(z,\xi))-\phi(x,y,z)\nonumber\\
&&-\mathbbm{1}_{B_1(0)}(\xi)\Big(D_x\phi(x,y,z),D_y\phi(x,y,z),D_z\phi(x,y,z)\Big)\cdot \Big(j(x,\xi),j(y,\xi),j(z,\xi)\Big)\Big]\mu(d\xi)\Big\}\nonumber\\
&<&+\infty,\nonumber\\\label{eq,4.3}
\end{eqnarray}
where $\phi(x,y,z)$ is defined in ($\bar{H}1$).
\end{lemma}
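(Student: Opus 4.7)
The key idea is to reduce the problem via the translation-type symmetry of $\phi$ and then exploit its anisotropic homogeneity. Since $\phi(x,y,z)$ depends on $(x,y,z)$ only through $a:=x-y$ and $b:=x+y-2z$, I write $\phi(x,y,z)=\Phi(a,b)$ with $\Phi(a,b):=(|a|^{2\bar\theta}+|b|^2)^{1/2}$. Set $J_1:=j(x,\xi)-j(y,\xi)$ and $J_2:=j(x,\xi)+j(y,\xi)-2j(z,\xi)$, so that the integrand in ($\ref{eq,4.3}$) equals
\[
\Phi(a+J_1,b+J_2)-\Phi(a,b)-\mathbbm{1}_{B_1(0)}(\xi)\,\nabla\Phi(a,b)\cdot(J_1,J_2).
\]
From ($H2$) I have $|J_1|\le|a|\rho(\xi)\le \Phi(a,b)^{1/\bar\theta}\rho(\xi)$; decomposing
\[
J_2=\bigl[j(x,\xi)+j(y,\xi)-2j(\tfrac{x+y}{2},\xi)\bigr]+2\bigl[j(\tfrac{x+y}{2},\xi)-j(z,\xi)\bigr]
\]
and applying both parts of $(\bar H2)$ gives $|J_2|\le(|a|^{\bar\theta}+|b|)\rho(\xi)\le 2\Phi(a,b)\rho(\xi)$.

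The second ingredient is the anisotropic homogeneity $\Phi(\lambda^{1/\bar\theta}a,\lambda b)=\lambda\Phi(a,b)$ for $\lambda>0$. Because $\bar\theta>1$, the function $\Phi^2$ is $C^2$ on $\mathbb{R}^{2n}$, so $\Phi$ itself is $C^2$ away from the origin and its derivatives are bounded on the compact anisotropic ``unit sphere'' $\{\Phi=1\}$. Differentiating the homogeneity identity in $a$ and $b$ yields, uniformly in $(a,b)\neq(0,0)$, scaling bounds of the form $|\nabla_a\Phi|\le C\Phi^{1-1/\bar\theta}$, $|\nabla_b\Phi|\le C$, $|\nabla_a^2\Phi|\le C\Phi^{1-2/\bar\theta}$, $|\nabla_a\nabla_b\Phi|\le C\Phi^{-1/\bar\theta}$, $|\nabla_b^2\Phi|\le C\Phi^{-1}$. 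Pairing these with the above bounds on $J_1,J_2$ makes every entry of the Hessian quadratic form scale like $\Phi(a,b)\rho(\xi)^2$.

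I then split the integral at $|\xi|=\delta_1$, where $\delta_1\in(0,1)$ is chosen small enough that $\sup_{B_{\delta_1}(0)}\rho$ is less than a fixed small constant (possible because $\rho(\xi)\to 0$ as $\xi\to 0$). This smallness, together with a case analysis based on whether $|a|^{\bar\theta}\ge \Phi/\sqrt{2}$ or $|b|\ge \Phi/\sqrt{2}$, ensures that $\Phi(a+tJ_1,b+tJ_2)$ is comparable to $\Phi(a,b)$ for every $t\in[0,1]$, so the scaling bounds above are valid along the whole segment. For $|\xi|<\delta_1$, Taylor's theorem to second order then yields an integrand bounded by $C\Phi(a,b)\rho(\xi)^2$, and the integrability assumption ($\ref{eq:1.5}$) gives $\int_{|\xi|<\delta_1}(\cdots)\,\mu(d\xi)\le C\Phi(a,b)$.

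For $|\xi|\ge\delta_1$ I estimate directly. Using $|a+J_1|\le|a|(1+\rho)$ and $|b+J_2|\le|b|+2\Phi\rho$, one obtains
\[
\Phi(a+J_1,b+J_2)^2\le \Phi(a,b)^2\bigl[(1+\rho(\xi))^{2\bar\theta}+4\rho(\xi)(1+\rho(\xi))\bigr],
\]
whence $\Phi(a+J_1,b+J_2)-\Phi(a,b)\le C\Phi(a,b)(\rho(\xi)+\rho(\xi)^{\bar\theta})$, while the gradient correction contributes at most $C\Phi(a,b)\rho(\xi)$ on $\{\delta_1\le|\xi|<1\}$. On $\{|\xi|\ge\delta_1\}$ the measure $\mu$ is finite and, since $\bar\theta\le 2$, $\rho^{\bar\theta}\le 1+\rho^2$, so both terms integrate to $\le C\Phi(a,b)$. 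Dividing by $\Phi(a,b)=\phi(x,y,z)$ yields $M_2<+\infty$. The main obstacle is the anisotropic singularity of $\Phi$ at the origin: without the scaling bounds for $\nabla^2\Phi$ obtained from the homogeneity identity, the second-order Taylor remainder could not be controlled uniformly in $(a,b)$, and the integral would be only locally finite rather than globally bounded after rescaling.
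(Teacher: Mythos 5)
Your argument is correct and follows essentially the same approach as the paper: split the integral at a radius where $\rho$ is uniformly small, bound the small-$|\xi|$ part by a second-order Taylor expansion of $\phi$ together with a uniform lower bound for $\phi$ along the interpolating segment (the paper makes your comparability claim explicit by showing $\phi(\cdot+tj(\cdot,\xi))\ge\tfrac12\phi$ when $\rho\le\tfrac14$), and estimate the large-$|\xi|$ part directly using the finiteness of $\mu$ away from the origin. Your anisotropic-homogeneity framing and the resulting scaling bounds for $\nabla\Phi$ and $\nabla^2\Phi$ are a tidier way to organize the same explicit Hessian bound $D^2\phi\le\frac{1}{\phi}\big[\cdots\big]$ that the paper computes in (\ref{eq4.5})--(\ref{eq4.6}), but the substance is identical.
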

\begin{proof}
By direct calculations, we have 
\begin{equation}\label{eq4.5}
D\phi(x,y,z)=\frac{1}{\phi(x,y,z)}\left[\bar{\theta}|x-y|^{2\bar{\theta}-2}
\left(         
  \begin{array}{c}   
    x-y   \\  
    y-x   \\
    0  
  \end{array}
\right)
+\left(         
  \begin{array}{c}   
    x+y-2z   \\  
    x+y-2z  \\
   -2x-2y+4z  
  \end{array}
\right)
\right]
\end{equation}
and 
\begin{eqnarray}
D^2\phi(x,y,z)&=&-\frac{1}{\phi(x,y,z)}D\phi(x,y,z)\otimes D\phi(x,y,z)+\frac{1}{\phi(x,y,z)}
\Bigg[\bar{\theta}|x-y|^{2\bar{\theta}-2}
\left(         
  \begin{array}{ccc}   
    I & -I &\,\, 0  \\  
    -I& I &\,\, 0\\
   0 & 0 &\,\, 0
  \end{array}
\right)\nonumber\\
&&+\bar\theta(2\bar\theta-2)|x-y|^{2\bar\theta-4}
\left(         
  \begin{array}{c}   
    x- y   \\  
    y- x   \\
    0  
  \end{array}
\right)\otimes
\left(         
  \begin{array}{c}   
    x-y   \\  
    y-x   \\
    0  
  \end{array}
\right)+
\left(         
  \begin{array}{ccc}   
    I & I & -2I  \\  
    I& I &   -2I\\
   -2I & -2I &  4I
  \end{array}
\right)
\Bigg]\nonumber\\
&\leq&\frac{1}{\phi(x,y,z)}\left[\bar{\theta}(2\bar\theta-1)|x-y|^{2\bar{\theta}-2}
\left(         
  \begin{array}{ccc}   
    I & -I &\,\, 0  \\  
    -I& I &\,\, 0\\
   0 & 0 &\,\, 0
  \end{array}
\right)+
\left(         
  \begin{array}{ccc}   
    I & I & -2I  \\  
    I& I &   -2I\\
   -2I & -2I &  4I
  \end{array}
\right)\right].\label{eq4.6}
\end{eqnarray}
Since $\lim_{\xi\to 0}\rho(\xi)=0$, there exists a positive constant $\delta_2<1$ such that $\sup_{\xi\in B_{\delta_2}(0)}\rho(\xi)\leq \frac{1}{4}$. By (\ref{eq4.5}) and (\ref{eq4.6}), we have, for any $x,y,z\in\mathbb R^n$ and $\phi(x,y,z)\not=0$,
\begin{eqnarray*}
&&\phi(x,y,z)^{-1}\int_{\mathbb R^n}\Big[\phi(x+j(x,\xi),y+j(y,\xi),z+j(z,\xi))-\phi(x,y,z)\nonumber\\
&&-\mathbbm{1}_{B_1(0)}(\xi)\Big(D_x\phi(x,y,z),D_y\phi(x,y,z),D_z\phi(x,y,z)\Big)\cdot \Big(j(x,\xi),j(y,\xi),j(z,\xi)\Big)\Big]\mu(d\xi)\nonumber\\
&\leq&\phi(x,y,z)^{-1}\Big\{\int_{B_{\delta_2}(0)}\Big[\sup_{0\leq t\leq 1}\Big(j(x,\xi),j(y,\xi),j(z,\xi)\Big)D^2\phi\big(x+tj(x,\xi),y+tj(y,\xi),z+tj(z,\xi)\big)\nonumber\\
&&\Big(j(x,\xi),j(y,\xi),j(z,\xi)\Big)^T\Big]\mu(d\xi)\nonumber\\
&&+\int_{B_{\delta_2}^c(0)}\Big[\Big(|x-y+j(x,\xi)-j(y,\xi)|^{2\bar\theta}+|x+y-2z+j(x,\xi)+j(y,\xi)-2j(z,\xi)|^2\Big)^{\frac{1}{2}}\nonumber\\
&&-\phi(x,y,z)-\mathbbm{1}_{B_1(0)}(\xi)\frac{1}{\phi(x,y,z)}\nonumber\\
&&\Big(\bar\theta|x-y|^{2\bar\theta-2}(x-y)\cdot \big(j(x,\xi)-j(y,\xi)\big)+(x+y-2z)\cdot\big(j(x,\xi)+j(y,\xi)-2j(z,\xi)\big)\Big)\Big]\mu(d\xi)\Big\}\nonumber
\end{eqnarray*}
\begin{eqnarray*}
&\leq&\phi(x,y,z)^{-1}\Big\{\int_{B_{\delta_2}(0)}\Big[\sup_{0\leq t\leq 1}\frac{1}{\phi(x+tj(x,\xi),y+tj(y,\xi),z+tj(z,\xi))}\Big(\big(j(x,\xi)+j(y,\xi)-2j(z,\xi)\big)^2\nonumber\\
&&+\bar\theta(2\bar\theta-1)|x-y+t\big(j(x,\xi)-j(y,\xi)\big)|^{2\bar\theta-2}\big(j(x,\xi)-j(y,\xi)\big)^2\Big)\Big]\mu(d\xi)\nonumber\\
&&+\int_{B_{\delta_2}^c(0)}\Big[\Big(|x-y+j(x,\xi)-j(y,\xi)|^{2\bar\theta}+|x+y-2z+j(x,\xi)+j(y,\xi)-2j(z,\xi)|^2\Big)^{\frac{1}{2}}\nonumber\\
&&-\phi(x,y,z)-\mathbbm{1}_{B_1(0)}(\xi)\frac{1}{\phi(x,y,z)}\nonumber\\
&&\Big(\bar\theta|x-y|^{2\bar\theta-2}(x-y)\cdot \big(j(x,\xi)-j(y,\xi)\big)+(x+y-2z)\cdot\big(j(x,\xi)+j(y,\xi)-2j(z,\xi)\big)\Big)\Big]\mu(d\xi)\Big\}.\nonumber
\end{eqnarray*}
By ($\bar{H}2$), we have
\begin{eqnarray*}
|j(x,\xi)+j(y,\xi)-2j(z,\xi)|&\leq& |j(x,\xi)+j(y,\xi)-2j(\frac{x+y}{2})|+|2j(\frac{x+y}{2},\xi)-2j(z,\xi)|\\
&\leq&\rho(\xi)\big(|x-y|^{\bar\theta}+|x+y-2z|\big).
\end{eqnarray*}
Using it, we obtain, for any $\xi\in B_{\delta_2}(0)$ and $t\in[0,1]$,
\begin{eqnarray*}
&&\phi(x+tj(x,\xi),y+tj(y,\xi),z+j(z,\xi))\\
&=&\Big[|x-y+t\big(j(x,\xi)-j(y,\xi)\big)|^{2\bar\theta}+|x+y-2z+t\big(j(x,\xi)+j(y,\xi)-2j(z,\xi)\big)|^2\Big]^{\frac{1}{2}}\\
&\geq&\Big[(\frac{3}{4})^{2\bar\theta}|x-y|^{2\bar\theta}+\big(\frac{3}{4}|x+y-2z|-\frac{1}{4}|x-y|^{\bar\theta}\big)^2\Big]^{\frac{1}{2}}\\
&\geq&\Big\{\big[(\frac{3}{4})^{2\bar\theta}-\frac{1}{16}\big]|x-y|^{2\bar\theta}+\frac{9}{32}|x+y-2z|^2\Big\}^{\frac{1}{2}}\\
&\geq&\frac{1}{2}\phi(x,y,z).
\end{eqnarray*}
Therefore, for any $x,y,z\in\mathbb R^n$ and $\phi(x,y,z)\not=0$, we have by ($\ref{eq:1.5}$),
\begin{eqnarray}
&&\phi(x,y,z)^{-1}\int_{\mathbb R^n}\Big[\phi(x+j(x,\xi),y+j(y,\xi),z+j(z,\xi))-\phi(x,y,z)\nonumber\\
&&-\mathbbm{1}_{B_1(0)}(\xi)\Big(D_x\phi(x,y,z),D_y\phi(x,y,z),D_z\phi(x,y,z)\Big)\cdot \Big(j(x,\xi),j(y,\xi),j(z,\xi)\Big)\Big]\mu(d\xi)\nonumber\\
&\leq&2\int_{B_{\delta_2}(0)}\Big[2+(\frac{5}{4})^{2\bar\theta-2}\bar\theta(2\bar\theta-1)\Big]\rho(\xi)^2\mu(d\xi)+\int_{B_{\delta_2}^c(0)}\Big\{\sqrt{2}\Big[\big(1+\rho(\xi)\big)^{\bar \theta}+\rho(\xi)\Big]-1\Big\}\mu(d\xi)\nonumber\\
&&+(\bar \theta+\frac{3}{2})\int_{B_1(0)\cap B_{\delta_2}^c(0)}\rho(\xi)\mu(d\xi)<+\infty.\label{eq,,4.12}
\end{eqnarray}
\end{proof}
\begin{theorem}\label{th3.1}
Suppose that the nonlinearity $G$ in ($\ref{eq1.1}$) is continuous, and satisfies ($\ref{eq:1.5}$)-($\ref{eq1.2}$) and ($\bar{H}1$). Suppose that $j(x,\xi)$ satisfies assumption ($\bar{H}2$). Then, if $u\in C^{0,1}(\bar{\mathbb R^n})$ is a viscosity solution of ($\ref{eq1.1}$) and $\gamma>\Lambda M_2+C_3$ where $M_2$ is defined in ($\ref{eq,4.3}$), then $u$ is $\bar \theta$-semiconcave in $\mathbb R^n$.
\end{theorem}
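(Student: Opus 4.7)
The plan is to argue by contradiction, tripling the variables with test function $\phi(x,y,z)=(|x-y|^{2\bar\theta}+|x+y-2z|^{2})^{1/2}$ from $(\bar H1)$, and combining three viscosity inequalities (two subsolution and one supersolution, all with the same function $u$) by means of Lemma \ref{le4.1}. Concretely, if $u$ is not $\bar\theta$-semiconcave, then for arbitrarily large $L>0$ there exist $x,y,z$ with $u(x)+u(y)-2u(z)>L\phi(x,y,z)$. I would then add a coercive perturbation and consider $\Phi(x,y,z)=u(x)+u(y)-2u(z)-L\phi(x,y,z)-\eta|x|^{2}$. Boundedness of $u$ and positivity of $\sup\Phi$ for $0<\eta<\eta_{0}$ guarantee a global maximizer $(\hat x,\hat y,\hat z)$ with $L\phi(\hat x,\hat y,\hat z)+\eta|\hat x|^{2}<u(\hat x)+u(\hat y)-2u(\hat z)$. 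By Lipschitz continuity of $u$ (which gives $u(\hat x)+u(\hat y)-2u(\hat z)\leq [u]_{0,1}(|\hat x-\hat z|+|\hat y-\hat z|)$), the quantity $\phi(\hat x,\hat y,\hat z)$ stays uniformly bounded above, and a straightforward adaptation of the argument in Theorem \ref{th2.1} shows $\eta|\hat x|^{2}\to 0$ as $\eta\to 0$.

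Next, the Hessian bound (\ref{eq4.6}) of Lemma \ref{le..4.2} together with the $2\eta I$ block from $\eta|x|^{2}$ places $D^{2}\psi$ in the cone required by Lemma \ref{le4.1}. Applying that lemma, for each $0<\delta<1$ and $\epsilon_{0}>0$ I obtain $X,Y,Z\in\mathbb S^{n}$ satisfying (\ref{e4.2}) (with $\hat x,\hat y$ in place of $x,y$) and the three viscosity inequalities of the lemma at $\hat x,\hat y,\hat z$ with nonlocal arguments $l_{\hat x},l_{\hat y},l_{\hat z}$. Since $u\in C^{0,1}(\bar{\mathbb R}^{n})$, I can invoke $(\bar H1)$ with $\varphi=u$ on the sum $2G(\hat z,\ldots)-G(\hat x,\ldots)-G(\hat y,\ldots)\geq 0$ to conclude
\begin{equation*}
\gamma\bigl(u(\hat x)+u(\hat y)-2u(\hat z)\bigr)\leq\Lambda(l_{\hat x}+l_{\hat y}-2l_{\hat z})+C_{3}(1+L)\phi(\hat x,\hat y,\hat z)+C_{4}\eta(1+|\hat x|^{2}),
\end{equation*}
and then use $u(\hat x)+u(\hat y)-2u(\hat z)>L\phi(\hat x,\hat y,\hat z)$ to obtain $\gamma L\phi\leq\Lambda(l_{\hat x}+l_{\hat y}-2l_{\hat z})+C_{3}(1+L)\phi+C_{4}\eta(1+|\hat x|^{2})$.

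The crux, and the step I expect to be the main obstacle, is controlling the combined nonlocal defect $l_{\hat x}+l_{\hat y}-2l_{\hat z}$ by $LM_{2}\phi(\hat x,\hat y,\hat z)$ up to vanishing terms. Following the template of Theorem \ref{th2.1}, I split each of the three nonlocal pieces into an inner integral over $B_{\delta}(0)$ and an outer integral over $B_{\delta}^{c}(0)$. On the inner part I use a second order Taylor expansion with the Hessian bound from the proof of Lemma \ref{le..4.2}, together with assumption $(\bar H2)$ (in particular the second-difference estimate $|j(x,\xi)+j(y,\xi)-2j(\tfrac{x+y}{2},\xi)|\leq|x-y|^{\bar\theta}\rho(\xi)$), to bound the inner part by $L$ times an integral of $\rho(\xi)^{2}$ plus a contribution of order $\eta$ that vanishes. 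On the outer part I apply the global maximum property of $\Phi$ with shifts $j(\hat x,\xi),j(\hat y,\xi),j(\hat z,\xi)$, which gives
\begin{equation*}
u(\hat x{+}j(\hat x,\xi))-u(\hat x)+u(\hat y{+}j(\hat y,\xi))-u(\hat y)-2\bigl[u(\hat z{+}j(\hat z,\xi))-u(\hat z)\bigr]\leq L\bigl[\phi(\hat x{+}j,\hat y{+}j,\hat z{+}j)-\phi\bigr]+\eta\bigl[|\hat x{+}j|^{2}-|\hat x|^{2}\bigr],
\end{equation*}
and I re-assemble the resulting inequality with the gradient correction terms so as to match the form of the integrand in Lemma \ref{le..4.2}.

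Letting first $\delta\to 0$ and then $\eta\to 0$ in the main inequality, the $\eta$-dependent pieces disappear (using $\eta|\hat x|^{2}\to 0$), and Lemma \ref{le..4.2} delivers $l_{\hat x}+l_{\hat y}-2l_{\hat z}\leq LM_{2}\phi(\hat x,\hat y,\hat z)$. Dividing through by $L\phi(\hat x,\hat y,\hat z)$ yields
\begin{equation*}
\gamma\leq\Lambda M_{2}+C_{3}\Bigl(1+\tfrac{1}{L}\Bigr),
\end{equation*}
which, taking $L$ large enough in view of the hypothesis $\gamma>\Lambda M_{2}+C_{3}$, gives the desired contradiction. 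Hence $\Phi\leq 0$ on $\mathbb R^{3n}$ for some fixed $L$ and all $\eta>0$, and sending $\eta\to 0$ shows $u(x)+u(y)-2u(\tfrac{x+y}{2})\leq L|x-y|^{\bar\theta}$, i.e. $u$ is $\bar\theta$-semiconcave.
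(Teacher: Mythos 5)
Your proposal follows essentially the same route as the paper's proof: the contradiction setup with $\Phi(x,y,z)=u(x)+u(y)-2u(z)-L\phi(x,y,z)-\eta|x|^{2}$, the Hessian bound on $\psi$, Lemma~\ref{le4.1} to produce $X,Y,Z$ and the three viscosity inequalities, the invocation of $(\bar H1)$ with $\varphi=u$ (valid because $u\in C^{0,1}(\bar{\mathbb R}^n)$), the inner/outer splitting of the combined nonlocal term with the global-maximum inequality on the outer part, the limits $\delta\to 0$ then $\eta\to 0$ using $\eta|\hat x|^2\to 0$ and the two-sided bound $\epsilon_1\le\phi(\hat x,\hat y,\hat z)\le\epsilon_1^{-1}$, and finally Lemma~\ref{le..4.2} to reach $\gamma\le\Lambda M_2+C_3(1+1/L)$ and the contradiction. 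The one small inaccuracy is your parenthetical claim that Lipschitz continuity is what gives the upper bound on $\phi(\hat x,\hat y,\hat z)$: the upper bound already follows from boundedness of $u$ via $L\phi<u(\hat x)+u(\hat y)-2u(\hat z)\le 2|u|_{0}$; the place where (uniform) continuity is actually used is in establishing the lower bound $\phi(\hat x,\hat y,\hat z)\ge\epsilon_1>0$ uniformly in $\eta$, which you subsume under ``a straightforward adaptation of the argument in Theorem~\ref{th2.1}.'' This is a minor misattribution, not a gap in the argument.
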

\begin{proof}
Let $\Phi(x,y,z)=u(x)+u(y)-2u(z)-\psi(x,y,z)$ where $\psi(x,y,z)=L\phi(x,y,z)+\eta|x|^2$ and $\phi(x,y,z)$ is defined in ($\bar{H}1$). We want to prove, for any $\eta>0$, we have $\Phi(x,y,z)\leq 0$ for all $x,y,z\in\mathbb R^n$ and some fixed  sufficiently large $L$. Otherwise, there exists a positive constant $\eta_0$ such that $\sup_{x,y,z\in\mathbb R^n}\Phi(x,y,z)>0$ if $0<\eta<\eta_0$. By boundedness of $u$, there is a point ($\hat x,\hat y,\hat z$) such that $\Phi(\hat x,\hat y,\hat z)=\sup_{x,y,z\in\mathbb R^n}\Phi(x,y,z)>0$. Therefore, we have 
\begin{equation}\label{eq4.2}
\max\{\eta|\hat x|^2,L\phi(\hat x,\hat y,\hat z)\}< u(\hat x)+u(\hat y)-2u(\hat z).
\end{equation}
By ($\ref{eq4.5}$) and ($\ref{eq4.6}$), we have 
\begin{eqnarray*}
D^2\psi(\hat x,\hat y,\hat z)&\leq&\frac{L}{\phi(\hat x,\hat y,\hat z)}\left[\bar{\theta}(2\bar\theta-1)|\hat x-\hat y|^{2\bar{\theta}-2}
\left(         
  \begin{array}{ccc}   
    I & -I &\,\, 0  \\  
    -I& I &\,\, 0\\
   0 & 0 &\,\, 0
  \end{array}
\right)+
\left(         
  \begin{array}{ccc}   
    I & I & -2I  \\  
    I& I &   -2I\\
   -2I & -2I &  4I
  \end{array}
\right)\right]\\
&&+2\eta\left(         
  \begin{array}{ccc}   
    I &\,\, 0 &\,\, 0  \\  
    0&\,\, 0 &\,\, 0\\
   0 &\,\, 0 &\,\, 0
  \end{array}
\right).
\end{eqnarray*}
By Lemma $\ref{le4.1}$, since $u\in BUC(\mathbb R^n)$ is a viscosity solution of ($\ref{eq1.1}$), for any $0<\delta<1$ and $\epsilon_0>0$, there are $X,Y,Z\in\mathbb S^n$ satisfying 
\begin{eqnarray}
\left(         
  \begin{array}{ccc}   
    X  &   0  &  0\\  
    0   & Y &  0\\
   0   &  0  &  -Z
  \end{array}
\right)-2\eta\left(         
  \begin{array}{ccc}   
    I&\,\,   0  &   \,\,0\\  
    0   & \,\,  0   &  \,\,0\\
    0  &  \,\,  0   &  \,\,0
  \end{array}
\right)&\leq&\frac{(1+\epsilon_0)L}{\phi(\hat x,\hat y,\hat z)}\Bigg[\bar{\theta}(2\bar\theta-1)|\hat x-\hat y|^{2\bar{\theta}-2}
\left(         
  \begin{array}{ccc}   
    I & -I &\,\, 0  \\  
    -I& I &\,\, 0\\
   0 & 0 &\,\, 0
  \end{array}
\right)\nonumber\\
&&+
\left(         
  \begin{array}{ccc}   
    I & I & -2I  \\  
    I& I &   -2I\\
   -2I & -2I &  4I
  \end{array}
\right)\Bigg],\label{eq:4.4}
\end{eqnarray}
such that
\begin{equation*}
G(\hat x,u(\hat x),LD_x\phi(\hat x,\hat y,\hat z)+2\eta\hat x,X,l_{\hat x})\leq 0,
\end{equation*}
\begin{equation*}
G(\hat y,u(\hat y),LD_y\phi(\hat x,\hat y,\hat z),Y,l_{\hat y})\leq 0,
\end{equation*}
\begin{equation*}
G(\hat z,u(\hat z),-\frac{L}{2}D_z\phi(\hat x,\hat y,\hat z),\frac{Z}{2},l_{\hat z})\geq 0,
\end{equation*}
where 
\begin{eqnarray*}
&&l_{\hat x}=I^{1,\delta}[\hat x,LD_x\phi(\hat x,\hat y,\hat z)+2\eta\hat x,L\phi(\cdot,\hat y,\hat z)+\eta|\cdot|^2]+I^{2,\delta}[\hat x,LD_x\phi(\hat x,\hat y,\hat z)+2\eta\hat x,u(\cdot)],\\
&&l_{\hat y}=I^{1,\delta}[\hat y,LD_y\phi(\hat x,\hat y,\hat z),L\phi(\hat x,\cdot,\hat z)]+I^{2,\delta}[\hat y,LD_y\phi(\hat x,\hat y,\hat z),u(\cdot)],\\
&&l_{\hat z}=I^{1,\delta}[\hat z,-\frac{L}{2}D_z\phi(\hat x,\hat y,\hat z),-\frac{L}{2}\phi(\hat x,\hat y,\cdot)]+I^{2,\delta}[\hat z,-\frac{L}{2}D_z\phi(\hat x,\hat y,\hat z),u(\cdot)].
\end{eqnarray*}
Therefore, by ($\bar{H}1$) and ($\ref{eq4.2}$), we have
\begin{equation}\label{eq:4.5}
\gamma L\phi(\hat x,\hat y,\hat z)\leq \Lambda(l_{\hat x}+l_{\hat y}-2l_{\hat z})+C_3(1+L)\phi(\hat x,\hat y,\hat z)+C_4\eta(1+|\hat x|^2).
\end{equation}
We now estimate the integral term $l_{\hat x}+l_{\hat y}-2l_{\hat z}$.
\begin{eqnarray*}
&&l_{\hat x}+l_{\hat y}-2l_{\hat z}\\
&=&L\int_{B_{\delta}(0)}\Big(\phi(\hat x+j(\hat x,\xi),\hat y,\hat z)-\phi(\hat x,\hat y,\hat z)-D_x\phi(\hat x,\hat y,\hat z)\cdot j(\hat x,\xi)\Big)\mu(d\xi)\\
&&+\eta\int_{B_{\delta}(0)}\Big(|\hat x+j(\hat x,\xi)|^2-|\hat x|^2-2\hat x\cdot j(\hat x,\xi)\Big)\mu(d\xi)\\
&&+L\int_{B_{\delta}(0)}\Big(\phi(\hat x,\hat y+j(\hat y,\xi),\hat z)-\phi(\hat x,\hat y,\hat z)-D_y\phi(\hat x,\hat y,\hat z)\cdot j(\hat y,\xi)\Big)\mu(d\xi)\\
&&+L\int_{B_{\delta}(0)}\Big(\phi(\hat x,\hat y,\hat z+j(\hat z,\xi))-\phi(\hat x,\hat y,\hat z)-D_z\phi(\hat x,\hat y,\hat z)\cdot j(\hat z, \xi)\Big)\mu(d\xi)
\end{eqnarray*}
\begin{eqnarray*}
&&+\int_{B_\delta^c(0)}\Big[u(\hat x+j(\hat x,\xi))-u(\hat x)+u(\hat y+j(\hat y,\xi))-u(\hat y)-2\big(u(\hat z+j(\hat z,\xi))-u(\hat z)\big)\\
&&-\mathbbm{1}_{B_1(0)}(\xi)\big(LD_x\phi(\hat x,\hat y,\hat z)+2\eta\hat x\big)\cdot j(\hat x,\xi)-\mathbbm{1}_{B_1(0)}(\xi)LD_y\phi(\hat x,\hat y,\hat z)\cdot j(\hat y,\xi)\\
&&-\mathbbm{1}_{B_1(0)}(\xi)LD_z\phi(\hat x,\hat y,\hat z)\cdot j(\hat z,\xi)\Big]\mu(d\xi).
\end{eqnarray*}
Thus, by ($\ref{eq4.5}$) and ($\ref{eq4.6}$), we have
\begin{eqnarray*}
&&l_{\hat x}+l_{\hat y}-2l_{\hat z}\\
&\leq&L\int_{B_\delta(0)}\Big[\sup_{0\leq t\leq 1}\frac{1}{\phi(\hat x+tj(\hat x,\xi),\hat y,\hat z)}\Big(\bar\theta(2\bar\theta-1)|\hat x-\hat y+tj(\hat x,\xi)|^{2\bar\theta-2}+1\Big)|j(\hat x,\xi)|^2\Big]\mu(d\xi)\\
&&+L\int_{B_\delta(0)}\Big[\sup_{0\leq t\leq 1}\frac{1}{\phi(\hat x,\hat y+tj(\hat y,\xi),\hat z)}\Big(\bar\theta(2\bar\theta-1)|\hat x-\hat y-tj(\hat y,\xi)|^{2\bar\theta-2}+1\Big)|j(\hat y,\xi)|^2\Big]\mu(d\xi)\\
&&+4L\int_{B_\delta(0)}\Big(\sup_{0\leq t\leq 1}\frac{1}{\phi(\hat x,\hat y,\hat z+tj(\hat z,\xi))}|j(\hat z,\xi)|^2\Big)\mu(d\xi)+\eta\int_{B_\delta(0)}|j(\hat x,\xi)|^2\mu(d\xi)\\
&&+\int_{B_\delta^c(0)}\Big[u(\hat x+j(\hat x,\xi))-u(\hat x)+u(\hat y+j(\hat y,\xi))-u(\hat y)-2\big(u(\hat z+j(\hat z,\xi))-u(\hat z)\big)\\
&&-\mathbbm{1}_{B_1(0)}(\xi)L\Big(D_x\phi(\hat x,\hat y,\hat z),D_y\phi(\hat x,\hat y,\hat z),D_z\phi(\hat x,\hat y,\hat z)\Big)\cdot \Big(j(\hat x,\xi),j(\hat y,\xi),j(\hat z,\xi)\Big)\\
&&-\mathbbm{1}_{B_1(0)}(\xi)2\eta\hat x\cdot j(\hat x,\xi)\Big]\mu(d\xi).
\end{eqnarray*}
Since $\Phi(x,y,z)$ attains a global maximum at ($\hat x,\hat y,\hat z$), we have, for any $\xi\in\mathbb R^n$,
\begin{eqnarray}\label{eq4.7}
&&u(\hat x+j(\hat x,\xi))-u(\hat x)+u(\hat y+j(\hat y,\xi))-u(\hat y)-2\big(u(\hat z+j(\hat z,\xi))-u(\hat z)\big)\nonumber\\
&\leq&L\phi(\hat x+j(\hat x,\xi),\hat y+j(\hat y,\xi),\hat z+j(\hat z,\xi))-L\phi(\hat x,\hat y,\hat z)+\eta|\hat x+j(\hat x,\xi)|^2-\eta|\hat x|^2.
\end{eqnarray}
By ($\ref{eq4.7}$), we have
\begin{eqnarray}\label{eq::::4.9}
&&l_{\hat x}+l_{\hat y}-2l_{\hat z}\nonumber\\
&\leq&L\int_{B_\delta(0)}\Big[\sup_{0\leq t\leq 1}\frac{1}{\phi(\hat x+tj(\hat x,\xi),\hat y,\hat z)}\Big(\bar\theta(2\bar\theta-1)|\hat x-\hat y+tj(\hat x,\xi)|^{2\bar\theta-2}+1\Big)|j(\hat x,\xi)|^2\Big]\mu(d\xi)\nonumber\\
&&+L\int_{B_\delta(0)}\Big[\sup_{0\leq t\leq 1}\frac{1}{\phi(\hat x,\hat y+tj(\hat y,\xi),\hat z)}\Big(\bar\theta(2\bar\theta-1)|\hat x-\hat y-tj(\hat y,\xi)|^{2\bar\theta-2}+1\Big)|j(\hat y,\xi)|^2\Big]\mu(d\xi)\nonumber\\
&&+4L\int_{B_\delta(0)}\Big(\sup_{0\leq t\leq 1}\frac{1}{\phi(\hat x,\hat y,\hat z+tj(\hat z,\xi))}|j(\hat z,\xi)|^2\Big)\mu(d\xi)\nonumber\\
&&+\eta\int_{\mathbb R^n}\Big(|\hat x+j(\hat x,\xi)|^2-|\hat x|^2-\mathbbm{1}_{B_1(0)}(\xi)2\hat x\cdot j(\hat x,\xi)\Big)\mu(d\xi)\nonumber\\
&&+L\int_{B_\delta^c(0)}\Big[\phi(\hat x+j(\hat x,\xi),\hat y+j(\hat y,\xi),\hat z+j(\hat z,\xi))-\phi(\hat x,\hat y,\hat z)\nonumber\\
&&-\mathbbm{1}_{B_1(0)}(\xi)\Big(D_x\phi(\hat x,\hat y,\hat z),D_y\phi(\hat x,\hat y,\hat z),D_z\phi(\hat x,\hat y,\hat z)\Big)\cdot \Big(j(\hat x,\xi),j(\hat y,\xi),j(\hat z,\xi)\Big)\Big]\mu(d\xi).
\end{eqnarray}
Similarly as in the proof of Theorem \ref{th2.1}, we have $\eta|\hat x|^2\to 0$ as $\eta\to 0$ and
\begin{equation*}
\epsilon_1\leq \phi(\hat x,\hat y,\hat z)\leq \epsilon_1^{-1},
\end{equation*}
where $\epsilon_1$ is a positive constant independent of $\eta$. Letting $\delta\rightarrow 0$ and then letting $\eta\rightarrow 0$ in ($\ref{eq:4.5}$), we have, by ($\ref{eq:1.5}$), ($\ref{eq::::4.9}$) and ($\bar{H}2$),
\begin{eqnarray*}
\gamma &L&\phi(\hat x,\hat y,\hat z)\leq \Lambda L\int_{\mathbb R^n}\Big[\phi(\hat x+j(\hat x,\xi),\hat y+j(\hat y,\xi),\hat z+j(\hat z,\xi))-\phi(\hat x,\hat y,\hat z)\\
&&-\mathbbm{1}_{B_1(0)}(\xi)\Big(D_x\phi(\hat x,\hat y,\hat z),D_y\phi(\hat x,\hat y,\hat z),D_z\phi(\hat x,\hat y,\hat z)\Big)\cdot \Big(j(\hat x,\xi),j(\hat y,\xi),j(\hat z,\xi)\Big)\Big]\mu(d\xi)\\
&&+C_3(1+L)\phi(\hat x,\hat y,\hat z).
\end{eqnarray*}
Therefore, by Lemma $\ref{le..4.2}$,
\begin{eqnarray}
\gamma&\leq&\Lambda\phi(\hat x,\hat y,\hat z)^{-1}\int_{\mathbb R^n}\Big[\phi(\hat x+j(\hat x,\xi),\hat y+j(\hat y,\xi),\hat z+j(\hat z,\xi))-\phi(\hat x,\hat y,\hat z)\nonumber\\
&&-\mathbbm{1}_{B_1(0)}(\xi)\Big(D_x\phi(\hat x,\hat y,\hat z),D_y\phi(\hat x,\hat y,\hat z),D_z\phi(\hat x,\hat y,\hat z)\Big)\cdot \Big(j(\hat x,\xi),j(\hat y,\xi),j(\hat z,\xi)\Big)\Big]\mu(d\xi)\nonumber\\
&&+C_3(1+\frac{1}{L})\nonumber\\
&\leq&\Lambda M_2+C_3(1+\frac{1}{L})<+\infty,\label{eq,,4.11}
\end{eqnarray}
where $M_2$ is defined in ($\ref{eq,4.3}$). This yields a contradiction, if $\gamma>\Lambda M_2+C_3$, for sufficiently large $L$. Therefore, $u$ is $\bar\theta$-semiconcave in $\mathbb R^n$.
\end{proof}
Let us consider the semiconcavity of viscosity solutions of the Bellman equation ($\ref{eq1.7}$). The following estimates will be frequently used in the proof of the semiconcavity.
\begin{lemma}\label{lemm4.2}
(a) If $f$ is $\bar\theta$-semiconvex with constant $C$ in $\mathbb R^n$ and $[f]_{0,1;\mathbb R^n}<+\infty$, then 
\begin{equation*}
2f(z)-f(x)-f(y)\leq C|x-y|^{\bar\theta}+[f]_{0,1;\mathbb R^n}|x+y-2z|.
\end{equation*}
Moreover, if $[f]_{1,\bar\theta-1;\mathbb R^n}<+\infty$, then
\begin{equation*}
|f(x)+f(y)-2f(z)|\leq \frac{\sqrt{n}}{2}[f]_{1,\bar\theta-1;\mathbb R^n}|x-y|^{\bar\theta}+[f]_{0,1;\mathbb R^n}|x+y-2z|.
\end{equation*}
(b) If $f\in C^{0,1}(\bar{\mathbb R}^n)$, then
\begin{equation*}
|f(x)-f(z)|\leq2\max\{|f|_{0;\mathbb R^n},[f]_{0,1;\mathbb R^n}\}\phi(x,y,z)^{\frac{1}{2}},
\end{equation*}
where $\phi(x,y,z)$ is defined in ($\bar{H}1$).
\end{lemma}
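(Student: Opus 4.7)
For part (a), my plan is to break the symmetric second difference through the midpoint $\frac{x+y}{2}$ and handle the resulting two pieces separately. Writing
\[
2f(z)-f(x)-f(y) \;=\; \bigl[2f\bigl(\tfrac{x+y}{2}\bigr)-f(x)-f(y)\bigr] \;+\; 2\bigl[f(z)-f\bigl(\tfrac{x+y}{2}\bigr)\bigr],
\]
the first bracket is controlled by the $\bar\theta$-semiconvexity hypothesis, giving $C|x-y|^{\bar\theta}$, while the second is bounded via the Lipschitz seminorm by $2[f]_{0,1;\mathbb R^n}\bigl|z-\tfrac{x+y}{2}\bigr|=[f]_{0,1;\mathbb R^n}|x+y-2z|$. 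Summing yields the first inequality.

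For the second inequality of (a), the $C^{1,\bar\theta-1}$ assumption lets us replace the semiconvexity estimate by a Taylor argument. Specifically, for $f\in C^{1,\bar\theta-1}$, write
\[
f(x)-f\bigl(\tfrac{x+y}{2}\bigr)=\int_0^1 Df\bigl(\tfrac{x+y}{2}+t\tfrac{x-y}{2}\bigr)\cdot\tfrac{x-y}{2}\,dt,
\]
and analogously for $f(y)-f\bigl(\tfrac{x+y}{2}\bigr)$ (which contributes $-\tfrac{x-y}{2}$ at the evaluation point $\tfrac{x+y}{2}-t\tfrac{x-y}{2}$). Summing, the linear terms in $Df(\tfrac{x+y}{2})$ cancel and one is left with an integral of $Df(\tfrac{x+y}{2}+t\tfrac{x-y}{2})-Df(\tfrac{x+y}{2}-t\tfrac{x-y}{2})$ against $\tfrac{x-y}{2}$. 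Using $|Df(a)-Df(b)|\le\sqrt n\,[f]_{1,\bar\theta-1;\mathbb R^n}|a-b|^{\bar\theta-1}$ (from the componentwise definition of $[\,\cdot\,]_{1,\bar\theta-1;\mathbb R^n}$) and integrating $\int_0^1 t^{\bar\theta-1}\,dt=1/\bar\theta\le 1$ gives $\bigl|f(x)+f(y)-2f\bigl(\tfrac{x+y}{2}\bigr)\bigr|\le\tfrac{\sqrt n}{2}[f]_{1,\bar\theta-1;\mathbb R^n}|x-y|^{\bar\theta}$. Combining with the Lipschitz bound on $2\bigl|f\bigl(\tfrac{x+y}{2}\bigr)-f(z)\bigr|$ as above produces the second inequality. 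The minor bookkeeping obstacle here is matching the constant $\sqrt n/2$; the bound $1/\bar\theta\le 1$ (recall $\bar\theta>1$) is what saves it.

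For part (b), set $A=|x-y|$, $B=|x+y-2z|$, so $\phi=(A^{2\bar\theta}+B^2)^{1/2}$, and note $A\le\phi^{1/\bar\theta}$ and $B\le\phi$. The identity $x-z=\tfrac12(x-y)+\tfrac12(x+y-2z)$ gives $|x-z|\le\tfrac12(A+B)$. I will split into two cases on the size of $\phi$. If $\phi\ge 1$, then $\phi^{1/2}\ge 1$ and the crude bound $|f(x)-f(z)|\le 2|f|_{0;\mathbb R^n}\le 2|f|_{0;\mathbb R^n}\phi^{1/2}$ suffices. If $\phi\le 1$, then $A\le\phi^{1/\bar\theta}\le 1$ and $B\le\phi\le\phi^{1/\bar\theta}$ (using $1/\bar\theta\le 1$), so $|x-z|\le\phi^{1/\bar\theta}$; since $1/\bar\theta\ge 1/2$ (because $\bar\theta\le 2$) and $\phi\le 1$, we have $\phi^{1/\bar\theta}\le\phi^{1/2}$, giving $|f(x)-f(z)|\le [f]_{0,1;\mathbb R^n}\phi^{1/2}$. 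In either case the claimed bound with constant $2\max\{|f|_{0;\mathbb R^n},[f]_{0,1;\mathbb R^n}\}$ holds. The only subtle point is ensuring the exponent $1/\bar\theta$ compares correctly with $1/2$, which is exactly where the range $1<\bar\theta\le 2$ enters.
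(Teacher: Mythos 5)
Your proposal is correct. Part (a), first inequality, matches the paper's argument verbatim: split $2f(z)-f(x)-f(y)$ through the midpoint $\tfrac{x+y}{2}$, use semiconvexity on $2f(\tfrac{x+y}{2})-f(x)-f(y)$ and the Lipschitz seminorm on $2(f(z)-f(\tfrac{x+y}{2}))$. For the second inequality of (a), the paper simply asserts that $[f]_{1,\bar\theta-1;\mathbb R^n}<+\infty$ forces $f$ to be both $\bar\theta$-semiconvex and $\bar\theta$-semiconcave with constant $\tfrac{\sqrt n}{2}[f]_{1,\bar\theta-1;\mathbb R^n}$ and then invokes the first estimate; your Taylor-expansion computation (cancelling the linear parts of $f(x)-f(\tfrac{x+y}{2})$ and $f(y)-f(\tfrac{x+y}{2})$, bounding the increment of $Df$ componentwise, and integrating $t^{\bar\theta-1}$) is exactly the missing justification for that claim, and it in fact yields the slightly sharper constant $\tfrac{\sqrt n}{2\bar\theta}$. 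For part (b), your route genuinely differs from the paper's: the paper again goes through the midpoint, bounding $|g(x)-g(\tfrac{x+y}{2})|$ by the Lipschitz seminorm and $|g(\tfrac{x+y}{2})-g(z)|$ by the interpolation inequality $|g(a)-g(b)|\le\bigl(2|g|_{0}\,[g]_{0,1}|a-b|\bigr)^{1/2}$, then matching the result against $\phi^{1/2}$; you instead dichotomize on $\phi\ge 1$ versus $\phi\le 1$, using the sup-norm in the first regime and the Lipschitz bound together with $|x-z|\le\tfrac12(|x-y|+|x+y-2z|)\le\phi^{1/\bar\theta}\le\phi^{1/2}$ in the second. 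Your case split is arguably cleaner: it makes explicit where the range $1<\bar\theta\le 2$ is used, and avoids having to compare $[g]_{0,1}|x-y|/2$ directly with $\phi^{1/2}$ for large $|x-y|$, a step the paper's displayed chain glosses over.
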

\begin{proof}
(a) Since $f$ is $\bar\theta$-semiconvex with constant $C$ in $\mathbb R^n$ and $[f]_{0,1;\mathbb R^n}<+\infty$,
\begin{eqnarray*}
2f(z)-f(x)-f(y)&=&2f(\frac{x+y}{2})-f(x)-f(y)+\Big(2f(z)-2f(\frac{x+y}{2})\Big)\\
&\leq& C|x-y|^{\bar\theta}+[f]_{0,1;\mathbb R^n}| x+y-2 z|.
\end{eqnarray*}
Moreover, if $[f]_{1,\bar\theta-1;\mathbb R^n}<+\infty$, then $f$ is $\bar\theta$-semiconvex and $\bar\theta$-semiconcave with a constant $\frac{\sqrt{n}}{2}[f]_{1,\bar\theta-1;\mathbb R^n}$ in $\mathbb R^n$. Thus, the result follows from the above estimate.\\
(b) Since $g\in C^{0,1}(\bar{\mathbb R}^n)$, then
\begin{eqnarray*}
|g(x)-g(z)|&\leq&|g(x)-g(\frac{x+y}{2})|+|g(\frac{x+y}{2})-g(z)|\\
&\leq&[g]_{0,1;\mathbb R^n}|\frac{x-y}{2}|+\Big(2|g|_{0;\mathbb R^n}[g]_{0,1;\mathbb R^n}\frac{|x+y-2z|}{2}\Big)^{\frac{1}{2}}\\
&\leq&2\max\{|g|_{0;\mathbb R^n},[g]_{0,1;\mathbb R^n}\}\phi(x,y,z)^{\frac{1}{2}}.
\end{eqnarray*}
\end{proof}
\begin{theorem}\label{th4.2}
Suppose that $c_{\alpha}\geq\gamma$ in $\mathbb R^n$ uniformly in $\alpha\in\mathcal{A}$. There exist a positive constant $C$ and $1<\bar\theta\leq 2$ such that ($\ref{eq::3.8}$) holds and
\begin{equation}\label{eq::4.8}
\sup_{\alpha\in\mathcal{A}}\max\{[\sigma_{\alpha}]_{0,1;\mathbb R^n},[\sigma_{\alpha}]_{1,\bar\theta-1;\mathbb R^n},[b_{\alpha}]_{0,1;\mathbb R^n},[b_{\alpha}]_{1,\bar\theta-1,\mathbb R^n},[f_{\alpha}]_{0,1,\mathbb R^n}\}<+\infty.
\end{equation} 
Suppose that the L\'evy measure $\mu$ satisfies ($\ref{eq:1.5}$), the family $\{j_{\alpha}(x,\xi)\}$ satisfies assumption ($\bar{H}2$) uniformly in $\alpha\in \mathcal A$, and $c_{\alpha}\in C^{1,\bar\theta-1}(\bar{\mathbb R}^n)$ and $\{f_\alpha\}$ is uniformly $\bar\theta$-semiconvex with constant $C_5$, uniformly in $\alpha\in\mathcal{A}$. Then, if $u\in C^{0,1}(\bar{\mathbb R}^n)$ is a viscosity solution of ($\ref{eq1.7}$) and $\gamma>N_2$ where 
\begin{eqnarray}\label{eq,4.13}
N_2:&=&\sup_{\phi(x,y,z)\not=0}\sup_{\alpha\in\mathcal{A}}\phi(x,y,z)^{-2}\Big\{\bar\theta(2\bar\theta-1)|x-y|^{2\bar\theta-2}Tr\Big[\big(\sigma_{\alpha}(x)-\sigma_{\alpha}(y)\big)\big(\sigma_{\alpha}(x)-\sigma_{\alpha}(y)\big)^T\Big]\nonumber\\
&&+Tr\Big[\big(\sigma_{\alpha}(x)+\sigma_{\alpha}(y)-2\sigma_{\alpha}(z)\big)\big(\sigma_{\alpha}(x)+\sigma_{\alpha}(y)-2\sigma_{\alpha}(z)\big)^T\Big]\nonumber\\
&&+\bar\theta|x-y|^{2\bar\theta-2}(x-y)\cdot\big(b_{\alpha}(y)-b_{\alpha}(x)\big)+(x+y-2z)\cdot\big(2b_{\alpha}(z)-b_{\alpha}(x)-b_{\alpha}(y)\big)\nonumber\\
&&+\phi(x,y,z)\int_{\mathbb R^n}\Big[\phi\big(x+j_\alpha(x,\xi),y+j_\alpha(y,\xi),z+j_\alpha(z,\xi)\big)-\phi(x,y,z)\nonumber\\
&&-\mathbbm{1}_{B_1(0)}(\xi)\Big(D_x\phi(x,y,z),D_y\phi(x,y,z),D_z\phi(x,y,z)\Big)\cdot \Big(j_\alpha(x,\xi),j_\alpha(y,\xi),j_\alpha(z,\xi)\Big)\Big]\mu(d\xi)\Big\}\nonumber\\
&<&+\infty,
\end{eqnarray}
then $u$ is $\bar\theta$-semiconcave in $\mathbb R^n$.
\end{theorem}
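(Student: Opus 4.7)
The plan is to mirror the argument of Theorem \ref{th3.1} (th3.1), but opening up the sup-structure of the Bellman operator and estimating each local coefficient by hand, rather than invoking the abstract condition $(\bar H1)$. Assume for contradiction that $u$ is not $\bar\theta$-semiconcave in $\mathbb R^n$, so that with $\Phi(x,y,z)=u(x)+u(y)-2u(z)-L\phi(x,y,z)-\eta|x|^2$ there exists $\eta_0>0$ such that for every $0<\eta<\eta_0$, $\Phi$ has a strictly positive global maximum at some $(\hat x,\hat y,\hat z)$, with $\phi(\hat x,\hat y,\hat z)>0$ and ($\ref{eq4.2}$) in force. By Lemma $\ref{le4.1}$ together with Remark $\ref{re4.1}$, for every $0<\delta<1$ and $\epsilon_0>0$ I obtain $X,Y,Z\in\mathbb S^n$ satisfying ($\ref{eq:4.4}$) and three viscosity inequalities for the Bellman operator at the three base points. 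The subsolution inequalities at $\hat x$ and $\hat y$ give $\sup_\alpha F_\alpha\leq 0$, so for every $\alpha\in\mathcal A$ individually $F_\alpha(\hat x,\cdot)\le 0$ and $F_\alpha(\hat y,\cdot)\le 0$. The supersolution inequality at $\hat z$ gives $\sup_\alpha F_\alpha(\hat z,\cdot)\ge 0$; for any $\epsilon>0$ I pick $\alpha^*=\alpha^*(\epsilon)\in\mathcal A$ such that $F_{\alpha^*}(\hat z,\cdot)\ge -\epsilon$. Multiplying the chosen $\hat z$-inequality by $2$ and subtracting the $\alpha=\alpha^*$ inequalities at $\hat x$ and $\hat y$ yields a single scalar estimate
\begin{equation*}
c_{\alpha^*}(\hat x)u(\hat x)+c_{\alpha^*}(\hat y)u(\hat y)-2c_{\alpha^*}(\hat z)u(\hat z)\le \mathcal L_{\alpha^*}+\mathcal N_{\alpha^*}+2\epsilon,
\end{equation*}
where $\mathcal L_{\alpha^*}$ collects trace, drift, and source contributions and $\mathcal N_{\alpha^*}=l_{\hat x,\alpha^*}+l_{\hat y,\alpha^*}-2l_{\hat z,\alpha^*}$ is the nonlocal piece.

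Next I estimate each term to produce the coefficient that appears in $N_2$. For the zero-order term I write
\begin{equation*}
c_{\alpha^*}(\hat x)u(\hat x)+c_{\alpha^*}(\hat y)u(\hat y)-2c_{\alpha^*}(\hat z)u(\hat z)=c_{\alpha^*}(\hat z)\bigl(u(\hat x)+u(\hat y)-2u(\hat z)\bigr)+R_{c,u},
\end{equation*}
bound the first summand from below by $\gamma L\phi(\hat x,\hat y,\hat z)$ using $c_{\alpha^*}\ge\gamma$ and ($\ref{eq4.2}$), and dominate $R_{c,u}$ by $C\phi(\hat x,\hat y,\hat z)$ using Lemma \ref{lemm4.2}(a) applied to $c_{\alpha^*}\in C^{1,\bar\theta-1}$ together with the Lipschitz bound on $u$ (noting that $L\phi\le 4|u|_{0;\mathbb R^n}$ forces $|\hat x-\hat y|$ and $|\hat x+\hat y-2\hat z|$ small for large $L$, so terms like $|\hat x-\hat y|^2$ are controlled by $|\hat x-\hat y|^{\bar\theta}\le\phi$). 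For the trace term I apply ($\ref{eq:4.4}$) column by column to the matrix $(\sigma_{\alpha^*}(\hat x),\sigma_{\alpha^*}(\hat y),-\sigma_{\alpha^*}(\hat z))$, exactly as in Theorem \ref{th2.2}, obtaining
\begin{equation*}
\mathrm{Tr}(\sigma_{\alpha^*}(\hat x)\sigma_{\alpha^*}^T(\hat x)X)+\mathrm{Tr}(\sigma_{\alpha^*}(\hat y)\sigma_{\alpha^*}^T(\hat y)Y)-\mathrm{Tr}(\sigma_{\alpha^*}(\hat z)\sigma_{\alpha^*}^T(\hat z)Z)
\end{equation*}
bounded by the first two terms of $N_2$ times $(1+\epsilon_0)L/\phi$, up to an $\eta$-dependent remainder controlled through ($\ref{eq::3.8}$) and ($\ref{eq::4.8}$). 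For the drift term, the identity $D_x\phi+D_y\phi+D_z\phi=0$ (read off from ($\ref{eq4.5}$)) lets me rewrite $-L[b_{\alpha^*}(\hat x)\cdot D_x\phi+b_{\alpha^*}(\hat y)\cdot D_y\phi+b_{\alpha^*}(\hat z)\cdot D_z\phi]$ as $L/\phi$ times precisely the drift block appearing in $N_2$. The source term $2f_{\alpha^*}(\hat z)-f_{\alpha^*}(\hat x)-f_{\alpha^*}(\hat y)$ is bounded by $C\phi$ using Lemma \ref{lemm4.2}(a) with the uniform $\bar\theta$-semiconvexity of $\{f_\alpha\}$ and $[f_\alpha]_{0,1;\mathbb R^n}<\infty$. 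Finally, $\mathcal N_{\alpha^*}$ is treated by the same pointwise inequalities used for ($\ref{eq::::4.9}$) in Theorem \ref{th3.1}, exploiting that $\{j_\alpha\}$ satisfies $(\bar H2)$ uniformly in $\alpha$.

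Combining these estimates, exactly as in Theorem \ref{th3.1} I verify that $\eta|\hat x|^2\to 0$ as $\eta\to 0$ and that $\phi(\hat x,\hat y,\hat z)$ remains bounded away from zero and infinity independently of $\eta$ (using $u\in C^{0,1}(\bar{\mathbb R}^n)$). Letting $\delta\to 0$, $\epsilon_0\to 0$, $\eta\to 0$, and finally $\epsilon\to 0$, then dividing by $L\phi(\hat x,\hat y,\hat z)$ and taking the supremum over $\alpha$, I arrive at
\begin{equation*}
\gamma\le N_2+\frac{C}{L},
\end{equation*}
where $C$ depends only on $|u|_{0;\mathbb R^n}$, $[u]_{0,1;\mathbb R^n}$, $\sup_\alpha[c_\alpha]_{1,\bar\theta-1;\mathbb R^n}$, and $\sup_\alpha[f_\alpha]_{0,1;\mathbb R^n}$. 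Choosing $L$ large enough contradicts $\gamma>N_2$, and so $u$ is $\bar\theta$-semiconcave in $\mathbb R^n$.

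I expect the zero-order term to be the main technical obstacle: unlike in Theorem \ref{th3.1}, where $(\bar H1)$ builds the $\gamma$-ellipticity into the nonlinearity structurally, here I must manually produce $\gamma L\phi$ from $c_{\alpha^*}(\hat x)u(\hat x)+c_{\alpha^*}(\hat y)u(\hat y)-2c_{\alpha^*}(\hat z)u(\hat z)$ while controlling the remainder $R_{c,u}$ of order $\phi$. This forces the full use of $c_\alpha\in C^{1,\bar\theta-1}$ combined with $u\in C^{0,1}$ and the sharp three-point estimates of Lemma \ref{lemm4.2}, and it explains why the theorem assumes one degree more regularity on $c_\alpha$ than on $f_\alpha$. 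A secondary difficulty is that the sup-structure of the Bellman operator forces selecting $\alpha^*=\alpha^*(\epsilon)$ before knowing the final $L$ and $\eta$, so all remaining estimates must be uniform in $\alpha$ in order for the final bound to pass to $\sup_\alpha$ and produce $N_2$.
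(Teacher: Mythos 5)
Your proposal is correct and follows essentially the same path as the paper's proof of Theorem \ref{th4.2}: the three-point test function with the $|x|^2$ localizer, the nonlocal Jensen--Ishii lemma (Lemma \ref{le4.1} and Remark \ref{re4.1}), the selection of a near-optimal $\alpha_\epsilon$ from the supersolution inequality, the decomposition of the zero-order term $c(\hat x)u(\hat x)+c(\hat y)u(\hat y)-2c(\hat z)u(\hat z)$ into $c(\hat z)(u(\hat x)+u(\hat y)-2u(\hat z))$ plus a remainder of order $\phi$, the column-by-column trace estimate as in Theorem \ref{th2.2}, the drift regrouping, the semiconvexity bound on $f_\alpha$, and the limit passage $\delta,\eta,\epsilon,\epsilon_0\to 0$ followed by $\gamma\le N_2+C/L$. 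The only cosmetic deviation is that you control the cross terms $(c(\hat x)-c(\hat z))(u(\hat x)-u(\hat z))$ via a smallness argument on $|\hat x-\hat y|$ and $|\hat x+\hat y-2\hat z|$ for large $L$, whereas the paper invokes the sharper three-point estimate in Lemma \ref{lemm4.2}(b) directly; both yield the required $O(\phi)$ bound.
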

\begin{proof}
At the beginning of the proof, we will show that the constant $N_2$ has an upper bound. By ($\ref{eq::4.8}$), Lemma \ref{lemm4.2} and the estimates in ($\ref{eq,,4.12}$), we have
\begin{eqnarray*}
N_2&\leq&\bar\theta(2\bar\theta-1)\sup_{\alpha\in\mathcal{A}}[\sigma_\alpha]_{0,1;\mathbb R^n}^2+\big(\frac{\sqrt{n}}{2}\sup_{\alpha\in\mathcal{A}}[\sigma_\alpha]_{1,\bar\theta-1;\mathbb R^n}+\sup_{\alpha\in\mathcal{A}}[\sigma_\alpha]_{0,1;\mathbb R^n}\big)^2+\bar\theta\sup_{\alpha\in\mathcal{A}}[b_\alpha]_{0,1;\mathbb R^n}\\
&&+\big(\frac{\sqrt{n}}{2}\sup_{\alpha\in\mathcal{A}}[b_\alpha]_{1,\bar\theta-1;\mathbb R^n}+\sup_{\alpha\in\mathcal{A}}[b_\alpha]_{0,1;\mathbb R^n}\big)+2\int_{B_{\delta_2}(0)}\Big[2+(\frac{5}{4})^{2\bar\theta-2}\bar\theta(2\bar\theta-1)\Big]\rho(\xi)^2\mu(d\xi)\\
&&+\int_{B_{\delta_2}^c(0)}\Big\{\sqrt{2}\Big[(1+\rho(\xi))^{\bar \theta}+\rho(\xi)\Big]-1\Big\}\mu(d\xi)+(\bar \theta+\frac{3}{2})\int_{B_1(0)\cap B_{\delta_2}^c(0)}\rho(\xi)\mu(d\xi)<+\infty,
\end{eqnarray*}
where $\delta_2$ was chosen in Lemma $\ref{le..4.2}$.

Then we want to prove that, for any $\eta>0$, $\Phi(x,y,z)=u(x)+u(y)-2u(z)-\psi(x,y,z)\leq 0$ for all $x,y,z\in\mathbb R^n$ and some fixed  sufficiently large $L$, where $\psi(x,y,z)$ is given in Theorem $\ref{th3.1}$. Otherwise, there exists a positive constant $\eta_0$ such that $\sup_{x,y,z\in\mathbb R^n}\Phi(x,y,z)>0$ if $0<\eta<\eta_0$. By boundedness of $u$, there is a point ($\hat x,\hat y,\hat z$) such that $\Phi(\hat x,\hat y,\hat z)=\sup_{x,y,z\in\mathbb R^n}\Phi(x,y,z)>0$. Therefore, we have ($\ref{eq4.2}$). By Remark $\ref{re4.1}$, since $u\in BUC(\mathbb R^n)$ is a viscosity solution of ($\ref{eq1.7}$), we have, for any $0<\delta<1$ and $\epsilon_0>0$, there are $X,Y,Z\in \mathbb S^n$ satisfying ($\ref{eq:4.4}$) such that
\begin{equation*}
\sup_{\alpha\in\mathcal{A}}\big\{-Tr\big(\sigma_{\alpha}(\hat x)\sigma_{\alpha}^T(\hat x)X\big)-l_{\hat x,\alpha}+b_\alpha(\hat x)\cdot D_x\psi(\hat x,\hat y,\hat z)+c_{\alpha}(\hat x)u(\hat x)+f_{\alpha}(\hat x)\big\}\leq 0,
\end{equation*}
\begin{equation*}
\sup_{\alpha\in\mathcal{A}}\big\{-Tr\big(\sigma_{\alpha}(\hat y)\sigma_{\alpha}^T(\hat y)Y\big)-l_{\hat y,\alpha}+b_\alpha(\hat y)\cdot D_y\psi(\hat x,\hat y,\hat z)+c_{\alpha}(\hat y)u(\hat y)+f_{\alpha}(\hat y)\big\}\leq 0,
\end{equation*}
\begin{equation*}
\sup_{\alpha\in\mathcal{A}}\big\{-Tr\big(\sigma_{\alpha}(\hat z)\sigma_{\alpha}^T(\hat z)\frac{Z}{2}\big)-l_{\hat z,\alpha}-b_\alpha(\hat z)\cdot\frac{D_z\psi(\hat x,\hat y,\hat z)}{2}+c_{\alpha}(\hat z)u(\hat z)+f_{\alpha}(\hat z)\big\}\geq 0,
\end{equation*}
where
\begin{equation*}
l_{\hat x,\alpha}=I^{1,\delta}[\hat x,D_x\psi(\hat x,\hat y,\hat z),\psi(\cdot,\hat y,\hat z)]+I^{2,\delta}[\hat x,D_x\psi(\hat x,\hat y,\hat z),u(\cdot)],
\end{equation*}
\begin{equation*}
l_{\hat y,\alpha}=I^{1,\delta}[\hat y,D_y\psi(\hat x,\hat y,\hat z),\psi(\hat x,\cdot,\hat z)]+I^{2,\delta}[\hat y,D_y\psi(\hat x,\hat y,\hat z),u(\cdot)],
\end{equation*}
\begin{equation*}
l_{\hat z,\alpha}=I^{1,\delta}[\hat z,-\frac{D_z\psi(\hat x,\hat y,\hat z)}{2},-\frac{\psi(\hat x,\hat y,\cdot)}{2}]+I^{2,\delta}[\hat z,-\frac{D_z\psi(\hat x,\hat y,\hat z)}{2},u(\cdot)].
\end{equation*}
Thus, for any $\epsilon>0$, there exists $\alpha_\epsilon\in\mathcal{A}$ such that
\begin{eqnarray}\label{eq::4.9}
c_{\alpha_\epsilon}(\hat x)u(\hat x)+c_{\alpha_\epsilon}(\hat y)u(\hat y)-2c_{\alpha_\epsilon}(\hat z)u(\hat z)\leq L_{\alpha_\epsilon}+N_{\alpha_\epsilon}+\epsilon,
\end{eqnarray}
where
\begin{eqnarray*}
L_{\alpha_\epsilon}&=&Tr\Big(\sigma_{\alpha_\epsilon}(\hat x)\sigma_{\alpha_\epsilon}^T(\hat x)X+\sigma_{\alpha_\epsilon}(\hat y)\sigma_{\alpha_\epsilon}^T(\hat y)Y-\sigma_{\alpha_\epsilon}(\hat z)\sigma_{\alpha_\epsilon}^T(\hat z)Z\Big)\\
&&-\Big(b_{\alpha_\epsilon}(\hat x)\cdot D_x\psi(\hat x,\hat y,\hat z)+b_{\alpha_\epsilon}(\hat y)\cdot D_y\psi(\hat x,\hat y,\hat z)+b_{\alpha_\epsilon}(\hat z)\cdot D_z\psi(\hat x,\hat y,\hat z)\Big)\\
&&+2f_{\alpha_\epsilon}(\hat z)-f_{\alpha_\epsilon}(\hat y)-f_{\alpha_\epsilon}(\hat x)
\end{eqnarray*}
and
\begin{equation*}
N_{\alpha_\epsilon}=l_{\hat x,\alpha_\epsilon}+l_{\hat y,\alpha_\epsilon}-2l_{\hat z,\alpha_\epsilon}.
\end{equation*}
Since $c_\alpha\in C^{1,\bar{\theta}-1}(\bar{\mathbb R}^n)$ uniformly in $\alpha\in\mathcal{A}$ and $u\in C^{0,1}(\bar{\mathbb R}^n)$, using Lemma \ref{lemm4.2}, we have
\begin{eqnarray}\label{eq::::4.13}
&&c_{\alpha_\epsilon}(\hat x)u(\hat x)+c_{\alpha_\epsilon}(\hat y)u(\hat y)-2c_{\alpha_\epsilon}(\hat z)u(\hat z)\nonumber\\
&=&c_{\alpha_\epsilon}(\hat z)\big(u(\hat x)+u(\hat y)-2u(\hat z)\big)+\big(c_{\alpha_\epsilon}(\hat x)+c_{\alpha_\epsilon}(\hat y)-2c_{\alpha_\epsilon}(\hat z)\big)u(\hat z)\nonumber\\
&&+\big(c_{\alpha_\epsilon}(\hat x)-c_{\alpha_\epsilon}(\hat z)\big)\big(u(\hat x)-u(\hat z)\big)+\big(c_{\alpha_\epsilon}(\hat y)-c_{\alpha_\epsilon}(\hat z)\big)\big(u(\hat y)-u(\hat z)\big)\nonumber\\
&\geq&\gamma\big(u(\hat x)+u(\hat y)-2u(\hat z)\big)-|u|_{0;\mathbb R^n}\Big(\frac{\sqrt{n}}{2}\sup_{\alpha\in\mathcal{A}}[c_\alpha]_{1,\bar\theta-1;\mathbb R^n}|\hat x-\hat y|^{\bar\theta}+\sup_{\alpha\in\mathcal{A}}[c_\alpha]_{0,1;\mathbb R^n}|\hat x+\hat y-2\hat z|\Big)\nonumber\\
&&-8\max\{|u|_{0;\mathbb R^n},[u]_{0,1;\mathbb R^n}\}\sup_{\alpha\in\mathcal{A}}\max\{|c_\alpha|_{0;\mathbb R^n},[c_\alpha]_{0,1;\mathbb R^n}\}\phi(\hat x,\hat y,\hat z).
\end{eqnarray}
By ($\ref{eq::3.8}$), ($\ref{eq:4.4}$) and ($\ref{eq::4.8}$), we see that
\begin{eqnarray*}
&&Tr\Big(\sigma_{\alpha_\epsilon}(\hat x)\sigma_{\alpha_\epsilon}^T(\hat x)X+\sigma_{\alpha_\epsilon}(\hat y)\sigma_{\alpha_\epsilon}^T(\hat y)Y-\sigma_{\alpha_\epsilon}(\hat z)\sigma_{\alpha_\epsilon}^T(\hat z)Z\Big)\\
&\leq&\frac{(1+\epsilon_0)L}{\phi(\hat x,\hat y,\hat z)}\Big\{\bar\theta(2\bar\theta-1)|\hat x-\hat y|^{2\bar\theta-2}Tr\Big[\big(\sigma_{\alpha_\epsilon}(\hat x)-\sigma_{\alpha_\epsilon}(\hat y)\big)\big(\sigma_{\alpha_\epsilon}(\hat x)-\sigma_{\alpha_\epsilon}(\hat y)\big)^T\Big]\\
&&+Tr\Big[\big(\sigma_{\alpha_\epsilon}(\hat x)+\sigma_{\alpha_\epsilon}(\hat y)-2\sigma_{\alpha_\epsilon}(\hat z)\big)\big(\sigma_{\alpha_\epsilon}(\hat x)+\sigma_{\alpha_\epsilon}(\hat y)-2\sigma_{\alpha_\epsilon}(\hat z)\big)^T\Big]\Big\}\\
&&+2\eta\big(C+\sup_{\alpha\in\mathcal{A}}[\sigma_\alpha]_{0,1;\mathbb R^n}|\hat x|\big)^2.
\end{eqnarray*}
Thus, we can estimate the local term $L_{\alpha_\epsilon}$ easily. By ($\ref{eq::3.8}$), ($\ref{eq4.5}$), ($\ref{eq::4.8}$), uniform $\bar\theta$-semiconvexity of $f_{\alpha}$ with constant $C_5$ and Lemma \ref{lemm4.2}, we have
\begin{eqnarray}\label{eq::::4.14}
L_{\alpha_\epsilon}&\leq&\frac{(1+\epsilon_0)L}{\phi(\hat x,\hat y,\hat z)}\Big\{\bar\theta(2\bar\theta-1)|\hat x-\hat y|^{2\bar\theta-2}Tr\Big[\big(\sigma_{\alpha_\epsilon}(\hat x)-\sigma_{\alpha_\epsilon}(\hat y)\big)\big(\sigma_{\alpha_\epsilon}(\hat x)-\sigma_{\alpha_\epsilon}(\hat y)\big)^T\Big]\nonumber\\
&&+Tr\Big[\big(\sigma_{\alpha_\epsilon}(\hat x)+\sigma_{\alpha_\epsilon}(\hat y)-2\sigma_{\alpha_\epsilon}(\hat z)\big)\big(\sigma_{\alpha_\epsilon}(\hat x)+\sigma_{\alpha_\epsilon}(\hat y)-2\sigma_{\alpha_\epsilon}(\hat z)\big)^T\Big]\Big\}\nonumber\\
&&+2\eta\big(C+\sup_{\alpha\in\mathcal{A}}[\sigma_\alpha]_{0,1;\mathbb R^n}|\hat x|\big)^2+\frac{\bar\theta L|\hat x-\hat y|^{2\bar\theta-2}}{\phi(\hat x,\hat y,\hat z)}(\hat x-\hat y)\cdot\big(b_{\alpha_\epsilon}(\hat y)-b_{\alpha_\epsilon}(\hat x)\big)\nonumber\\
&&+\frac{L}{\phi(\hat x,\hat y,\hat z)}(\hat x+\hat y-2\hat z)\cdot\big(2b_{\alpha_\epsilon}(\hat z)-b_{\alpha_\epsilon}(\hat x)-b_{\alpha_\epsilon}(\hat y)\big)+2\eta\big(C|\hat x|+\sup_{\alpha\in\mathcal{A}}[b_\alpha]_{0,1;\mathbb R^n}|\hat x|^2\big)\nonumber\\
&&+C_5|\hat x-\hat y|^{\bar\theta}+\sup_{\alpha\in\mathcal{A}}[f_\alpha]_{0,1;\mathbb R^n}|\hat x+\hat y-2\hat z|.
\end{eqnarray}
Similarly as in the proof of Theorem \ref{th2.1}, we have $\eta|\hat x|^2\to 0$ as $\eta\to 0$ and
\begin{equation*}
\epsilon_1\leq \phi(\hat x,\hat y,\hat z)\leq \epsilon_1^{-1},
\end{equation*}
where $\epsilon_1$ is a positive constant independent of $\eta$. Letting $\delta\to 0$, $\eta\to 0$, $\epsilon\to 0$ and $\epsilon_0\to 0$ in ($\ref{eq::4.9}$), we have, by ($\ref{eq4.2}$), ($\ref{eq::::4.13}$), ($\ref{eq::::4.14}$) and the same estimates on the nonlocal term $N_{\alpha_\epsilon}$ as Theorem $\ref{th3.1}$
\begin{eqnarray*}
&&\gamma L\phi(\hat x,\hat y,\hat z)\\
&\leq&L\sup_{\alpha\in\mathcal{A}}\phi(\hat x,\hat y,\hat z)^{-1}\Big\{\bar\theta(2\bar\theta-1)|\hat x-\hat y|^{2\bar\theta-2}Tr\Big[\big(\sigma_{\alpha}(\hat x)-\sigma_{\alpha}(\hat y)\big)\big(\sigma_{\alpha}(\hat x)-\sigma_{\alpha}(\hat y)\big)^T\Big]\nonumber\\
&&+Tr\Big[\big(\sigma_{\alpha}(\hat x)+\sigma_{\alpha}(\hat y)-2\sigma_{\alpha}(\hat z)\big)\big(\sigma_{\alpha}(\hat x)+\sigma_{\alpha}(\hat y)-2\sigma_{\alpha}(\hat z)\big)^T\Big]\\
&&+\bar\theta|\hat x-\hat y|^{2\bar\theta-2}(\hat x-\hat y)\cdot\big(b_{\alpha}(\hat y)-b_{\alpha}(\hat x)\big)+(\hat x+\hat y-2\hat z)\cdot\big(2b_{\alpha}(\hat z)-b_{\alpha}(\hat x)-b_{\alpha}(\hat y)\big)\\
&&+\phi(\hat x,\hat y,\hat z)\int_{\mathbb R^n}\Big[\phi\big(\hat x+j_\alpha(\hat x,\xi),\hat y+j_\alpha(\hat y,\xi),\hat z+j_\alpha(\hat z,\xi)\big)-\phi(\hat x,\hat y,\hat z)\\
&&-\mathbbm{1}_{B_1(0)}(\xi)\Big(D_x\phi(\hat x,\hat y,\hat z),D_y\phi(\hat x,\hat y,\hat z),D_z\phi(\hat x,\hat y,\hat z)\Big)\cdot \Big(j_\alpha(\hat x,\xi),j_\alpha(\hat y,\xi),j_\alpha(\hat z,\xi)\Big)\Big]\mu(d\xi)\Big\}\\
&&+C_5|\hat x-\hat y|^{\bar\theta}+\sup_{\alpha\in\mathcal{A}}[f_\alpha]_{0,1;\mathbb R^n}|\hat x+\hat y-2\hat z|\\
&&+|u|_{0;\mathbb R^n}\Big(\frac{\sqrt{n}}{2}\sup_{\alpha\in\mathcal{A}}[c_\alpha]_{1,\bar\theta-1;\mathbb R^n}|\hat x-\hat y|^{\bar\theta}+\sup_{\alpha\in\mathcal{A}}[c_\alpha]_{0,1;\mathbb R^n}|\hat x+\hat y-2\hat z|\Big)\nonumber\\
&&+8\max\{|u|_{0;\mathbb R^n},[u]_{0,1;\mathbb R^n}\}\sup_{\alpha\in\mathcal{A}}\max\{|c_\alpha|_{0;\mathbb R^n},[c_\alpha]_{0,1;\mathbb R^n}\}\phi(\hat x,\hat y,\hat z).
\end{eqnarray*}
Therefore,
\begin{eqnarray}
\gamma&\leq&\sup_{\alpha\in\mathcal{A}}\phi(\hat x,\hat y,\hat z)^{-2}\Big\{\bar\theta(2\bar\theta-1)|\hat x-\hat y|^{2\bar\theta-2}Tr\Big[\big(\sigma_{\alpha}(\hat x)-\sigma_{\alpha}(\hat y)\big)\big(\sigma_{\alpha}(\hat x)-\sigma_{\alpha}(\hat y)\big)^T\Big]\nonumber\\
&&+Tr\Big[\big(\sigma_{\alpha}(\hat x)+\sigma_{\alpha}(\hat y)-2\sigma_{\alpha}(\hat z)\big)\big(\sigma_{\alpha}(\hat x)+\sigma_{\alpha}(\hat y)-2\sigma_{\alpha}(\hat z)\big)^T\Big]\nonumber\\
&&+\bar\theta|\hat x-\hat y|^{2\bar\theta-2}(\hat x-\hat y)\cdot\big(b_{\alpha}(\hat y)-b_{\alpha}(\hat x)\big)+(\hat x+\hat y-2\hat z)\cdot\big(2b_{\alpha}(\hat z)-b_{\alpha}(\hat x)-b_{\alpha}(\hat y)\big)\nonumber\\
&&+\phi(\hat x,\hat y,\hat z)\int_{\mathbb R^n}\Big[\phi\big(\hat x+j_\alpha(\hat x,\xi),\hat y+j_\alpha(\hat y,\xi),\hat z+j_\alpha(\hat z,\xi)\big)-\phi(\hat x,\hat y,\hat z)\nonumber\\
&&-\mathbbm{1}_{B_1(0)}(\xi)\Big(D_x\phi(\hat x,\hat y,\hat z),D_y\phi(\hat x,\hat y,\hat z),D_z\phi(\hat x,\hat y,\hat z)\Big)\cdot \Big(j_\alpha(\hat x,\xi),j_\alpha(\hat y,\xi),j_\alpha(\hat z,\xi)\Big)\Big]\mu(d\xi)\Big\}\nonumber\\
&&+\frac{C_6}{L}\leq N_2+\frac{C_6}{L},\label{eq,,4.19}
\end{eqnarray}
where $N_2$ is defined in ($\ref{eq,4.13}$) and $C_6$ is a positive constant. Hence, if $\gamma>N_2$, we can find a sufficiently large $L$ such that we have a contradiction in ($\ref{eq,,4.19}$). Therefore, $u$ is $\bar\theta$-semiconcave in $\mathbb R^n$.
\end{proof}

\textbf{Acknowledgement.} I would like to thank my advisor Prof. Andrzej \Swiech\, for suggesting the problem and for all the useful discussions and encouragement.

\end{document}